\documentclass[final,nomarks]{dmtcs-episciences}
\usepackage[utf8]{inputenc}
\usepackage[T1]{fontenc}
\usepackage{amsmath,amsthm,amsfonts,amssymb,calc,microtype,mathtools,thmtools,underscore}
\usepackage[round]{natbib}
\usepackage[usenames,dvipsnames,svgnames,table]{xcolor}
\hypersetup{
	colorlinks =true,
	citecolor={black},
	linkcolor={black},
	pdftitle={Improved Product Structure for Graphs on Surfaces},
	pdfauthor={Distel--Hickingbotham--Huynh----Wood},
}

\newcommand{\defn}[1]{\textcolor{Maroon}{\emph{#1}}}

\usepackage[shortlabels]{enumitem}
\setlist[itemize]{topsep=0ex,itemsep=0ex,parsep=0ex}
\setlist[enumerate]{topsep=0ex,itemsep=0ex,parsep=0ex}

\usepackage[noabbrev,capitalise]{cleveref}
\crefname{lem}{Lemma}{Lemmas}
\crefname{thm}{Theorem}{Theorems}
\crefname{cor}{Corollary}{Corollaries}
\crefname{prop}{Proposition}{Propositions}
\crefname{conj}{Conjecture}{Conjectures}
\crefname{open}{Open Problem}{Open Problems}
\crefname{obs}{Observation}{Observations}

\crefformat{equation}{(#2#1#3)}
\Crefformat{equation}{Equation #2(#1)#3}

\theoremstyle{plain}
\newtheorem{thm}{Theorem}
\newtheorem{lem}[thm]{Lemma}

\newtheorem{obs}[thm]{Observation}

\theoremstyle{definition}

\DeclarePairedDelimiter{\floor}{\lfloor}{\rfloor}

\renewcommand{\le}{\leqslant}
\renewcommand{\leq}{\leqslant}

\renewcommand{\geq}{\geqslant}

\theoremstyle{definition}

\DeclareMathOperator*{\dist}{dist}
\DeclareMathOperator*{\bs}{\backslash}
\DeclareMathOperator*{\sse}{\subseteq}

\DeclareMathOperator*{\Pcal}{\mathcal{P}}

\DeclareMathOperator*{\Lcal}{\mathcal{L}}

\DeclareMathOperator{\tw}{tw}

\author{Marc Distel \and Robert Hickingbotham 	\and Tony Huynh 	\and David R. Wood}

\title{Improved Product Structure for Graphs on Surfaces\thanks{Research of M.D.\ and R.H.\ is supported by an Australian Government Research Training Program Scholarship. Research of T.H.\ and D.W.\ is supported by the Australian Research Council.}	
}

\affiliation{
	School of Mathematics, Monash University, Melbourne, Australia}
\keywords{product structure, graphs on surfaces}
\received{2021-12-21}

\revised{2022-09-16, 2022-09-22}

\accepted{2022-09-26}

\begin{document}
	\publicationdetails{24}{2022}{2}{6}{8877}
	\maketitle
	\begin{abstract}
	 	Dujmovi\'c, Joret, Micek, Morin, Ueckerdt and Wood~[J.~ACM 2020] proved that for every graph $G$ with Euler genus $g$ there is a graph $H$ with treewidth at most 4 and a path $P$ such that $G\sse H \boxtimes P \boxtimes K_{\max\{2g,3\}}$. We improve this result by replacing ``4'' by ``3'' and with $H$ planar. We in fact prove a more general result in terms of so-called framed graphs. This implies that every $(g,d)$-map graph is contained in $ H \boxtimes P\boxtimes K_\ell$, for some planar graph $H$ with treewidth $3$, where $\ell=\max\{2g\floor{\frac{d}{2}},d+3\floor{\frac{d}{2}}-3\}$. It also implies that every $(g,1)$-planar graph (that is, graphs that can be drawn in a surface of Euler genus $g$ with at most one crossing per edge) is contained in $H\boxtimes P\boxtimes K_{\max\{4g,7\}}$, for some planar graph $H$ with treewidth $3$. 
	\end{abstract}
	
The motivation for this work is the following question: what is the global structure for graphs embeddable in a fixed surface? \citet{DJMMUW20} answered this question for planar graphs\footnote{A \defn{plane graph} is a graph embedded in the plane with no crossings. A \defn{plane triangulation} is a plane graph in which every face is bounded by a triangle (that is, has length 3). A \defn{plane near-triangulation} is a plane graph, where the outer-face is a  cycle, and every internal face is a triangle.} in terms of products\footnote{For two graphs $ G $ and $ H $, the \defn{strong product} $ G \boxtimes H $ is the graph with vertex-set $ V(G) \times V(H) $ and an edge between two vertices $ (v,w) $ and $ (v',w') $ if and only if
	$ v=v'$ and $ ww' \in E(H) $, or
	$ w=w' $ and $ vv' \in E(G) $, or
	$ vv'\in E(G) $ and $ ww' \in E(H) $.} of graphs of bounded treewidth\footnote{A \defn{tree-decomposition} of a graph $G$ is a collection $(B_x\subseteq V(G):x\in V(T))$ of subsets of $V(G)$ (called \defn{bags}) indexed by the nodes of a tree $T$, such that (a) for every edge $uv\in E(G)$, some bag $B_x$ contains both $u$ and $v$, and (b) for every vertex $v\in V(G)$, the set $\{x\in V(T):v\in B_x\}$ induces a non-empty subtree of $T$. The \defn{width} of a tree-decomposition is the size of a largest bag minus 1. The \defn{treewidth} of a graph $G$, denoted by \defn{$\tw(G)$}, is the minimum width of a tree-decomposition of $G$. 
	Treewidth is recognised as the most important measure of how similar a given graph is to a tree.}\,\footnote{A graph $G$ is \defn{contained} in a graph $X$ if $G$ is isomorphic to a subgraph of $X$. A multigraph $G$ is \defn{contained} in a graph $X$ if the simple graph underlying $G$ is contained in $X$.}.

\begin{thm}[\citep{DJMMUW20}] 
	\label{PlanarStructure}
	Every planar graph is contained in $H\boxtimes P \boxtimes K_3$ for some planar graph $H$ with treewidth 3 and for some path $P$.
\end{thm}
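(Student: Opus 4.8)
The plan is to reduce to plane triangulations, produce a partition of the vertex set into ``tripods'' whose quotient graph is planar of treewidth $3$, and read the product off from that partition, following the general approach of \citet{DJMMUW20}. Since being contained in a graph is inherited by subgraphs, and every planar graph is a subgraph of some plane triangulation, I may assume $G$ is a plane triangulation. Fix a BFS spanning tree $T$ of $G$ rooted at an arbitrary vertex, with BFS layers $V_0,V_1,\dots$; these form a \emph{layering}, meaning every edge of $G$ joins two vertices in the same or in consecutive layers. Let $P$ be the path with vertices $0,1,2,\dots$ (one per layer). Call a subpath of a root-to-vertex path of $T$ a \emph{vertical path}; it meets each layer at most once. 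A \emph{tripod} is a connected subgraph of $G$ that is the union of at most three vertical paths, and hence meets each layer in at most three vertices. The first observation is that it suffices to find a partition $\Pcal$ of $V(G)$ into tripods such that the quotient graph $H:=G/\Pcal$ (with a vertex for each tripod, two tripods adjacent exactly when joined by an edge of $G$) is planar with $\tw(H)\le 3$: fix an ordering of the at most three vertical paths of each tripod, and send a vertex $v$ lying in layer $i$ and in the $j$th vertical path of its tripod $A$ to $(A,i,j)\in V(H)\times V(P)\times V(K_3)$. This is injective, and each edge of $G$ maps to an edge of $H\boxtimes P\boxtimes K_3$: an edge inside a tripod changes the layer by at most one and either leaves the $K_3$-coordinate alone (if it stays in one vertical path, in which case it changes the $P$-coordinate) or changes the $K_3$-coordinate, while an edge between two tripods maps to an edge because those tripods are adjacent in $H$. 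So $G\sse H\boxtimes P\boxtimes K_3$, as required.

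To construct $\Pcal$ I would run a recursive \emph{tripod decomposition}. Throughout, maintain a collection of interior-disjoint \emph{regions}, each being the interior of a plane near-triangulation whose boundary cycle is the union of at most three vertical paths of $T$ meeting at ``corners''; initially there is one region, bounded by the outer triangle of $G$. To process a region $R$: take a vertex of $R$ of least layer and follow $T$ toward the root from it, and from appropriate boundary vertices of $R$ toward the three corners; a planarity argument shows these $T$-paths assemble into a tripod $A_R$ that lies strictly inside $R$ and whose removal cuts $R$ into at most three smaller regions, each bounded by at most three vertical paths --- two legs of $A_R$ and one of the boundary paths of $R$. Add $A_R$ to $\Pcal$ and recurse on the strictly smaller sub-regions, stopping at empty regions. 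This yields a partition of $V(G)$ into tripods.

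It remains to analyse $H=G/\Pcal$. It is planar, since it is obtained from the planar graph $G$ by contracting the connected sets of $\Pcal$, so $H$ is a minor of $G$. For the treewidth, take the rooted tree $\Tcal$ whose nodes are the regions, the children of a region being the sub-regions it spawns, and give node $R$ the bag consisting of $A_R$ and the at most three tripods whose vertical paths bound $R$; so each bag has at most four vertices. Every edge of $H$ comes from an edge $xy$ of $G$ between distinct tripods $A\ni x$ and $B\ni y$, and since $A_R$ always lies strictly inside $R$, whichever of $A,B$ was placed first lies on the boundary of the region where the other was placed; hence both lie in that region's bag. Finally, the regions whose bag contains a fixed tripod $A$ are the region where $A$ was placed, together with those descendants of it in $\Tcal$ that still have a leg of $A$ on their boundary; since boundary paths are inherited intact down $\Tcal$, this set of nodes is a subtree. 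Therefore this is a tree-decomposition of $H$ of width at most $3$, which finishes the proof.

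The hard part is the construction step: showing the invariant ``every region is bounded by at most three vertical paths'' survives, i.e.\ that a valid tripod $A_R$ --- connected, strictly inside $R$, cutting $R$ into at most three sub-regions each again bounded by at most three vertical paths --- always exists. This requires a careful case analysis over the degenerate shapes a region can take (trivial boundary paths, coincident corners, a leg of $A_R$ stopping partway along a boundary path rather than at a corner, $R$ being a single triangle, and so on), and it is exactly here that one must use that $T$ is a BFS tree, so that $T$-paths behave like shortest paths and the legs of $A_R$ cannot wind around to create a fourth side. Keeping this count at three throughout --- rather than some larger constant, which would only yield a larger bound on $\tw(H)$ --- is the whole point, and the delicate part of the argument.
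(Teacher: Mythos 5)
The theorem you are proving is quoted in this paper from \citet{DJMMUW20} and is not reproved here, so the relevant comparison is with the original argument; your outline follows that argument faithfully. The reduction to plane triangulations, the translation between a partition into tripods over a BFS layering and containment in $H\boxtimes P\boxtimes K_3$ (this is \cref{VerticalPathsLemma}), and the width-$3$ tree-decomposition of the quotient built on the region tree are all correct as sketched. However, the one step that carries essentially all of the content of the theorem --- the existence of the recursive tripod decomposition, i.e.\ that inside every region bounded by at most three vertical paths one can find a tripod $A_R$ whose removal leaves at most three regions that are again bounded by at most three vertical paths --- is asserted (``a planarity argument shows \dots'') rather than proved, and you acknowledge as much in your final paragraph. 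Without that lemma the proof is incomplete: everything else is routine bookkeeping, whereas this lemma is exactly Lemma~17 of \citet{DJMMUW20} (essentially the $d=3$ case of \cref{NewDMWv2}, where each set $X_i$ is empty and every part is a tripod), and its proof is where the several pages of case analysis live. So what you have is a correct roadmap with the central technical lemma missing.

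One smaller inaccuracy: your stated reason for needing a BFS tree in the construction step is off. The tripod-decomposition lemma holds for an \emph{arbitrary} spanning tree rooted at a vertex of the outer face; the reason a leg of $A_R$ cannot wind around and create a fourth side is the tree property that once a root-ward path in $T$ reaches a vertex of the boundary of $R$, it coincides with the boundary vertical path from that point on --- no shortest-path behaviour is invoked. The BFS property is used only at the very end, to guarantee that each vertical path of $T$ meets each layer $V_i$ at most once, which is what bounds the width of the layered partition (equivalently, the $K_3$ factor) by $3$.
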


This result, now known as the Planar Graph Product Structure Theorem, has been the key tool in solving several open problems regarding queue layouts~\citep{DJMMUW20}, non-repetitive colourings~\citep{DEJWW20}, centred colourings~\citep{DFMS21}, clustered colourings~\citep{DEMWW22}, adjacency labellings~\citep{BGP20,DEJGMM21,EJM}, vertex rankings~\citep{BDJM}, twin-width~\citep{BKW,BDHK} and infinite graphs~\citep{HMSTW}. 

\citet{DJMMUW20} generalised \cref{PlanarStructure} for graphs embeddable in any fixed surface\footnote{The \defn{Euler genus} of a surface with $h$ handles and $c$ cross-caps is $2h+c$. The \defn{Euler genus} of a graph $G$ is the minimum integer $g\geq 0$ such that there is an embedding of $G$ in a surface of Euler genus $g$; see \cite{MoharThom} for more about graph embeddings in surfaces. A \defn{triangulation} of  a surface $\Sigma$ is a graph embedded in $\Sigma$ with no crossings, such that every face is a triangle. } as follows. A graph $H$ is \defn{apex} if $H-v$ is planar for some vertex $v$ of $H$. 

\begin{thm}[\citep{DJMMUW20}] 
	\label{SurfaceStructure}
	Every graph with Euler genus $g$ is contained in $H\boxtimes P \boxtimes K_{\max\{2g,3\}}$ for some apex graph $H$ with treewidth 4 and for some path $P$.
\end{thm}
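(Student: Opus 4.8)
The plan is to deduce \cref{SurfaceStructure} from \cref{PlanarStructure}---in fact from the layered form of \cref{PlanarStructure} that is established along the way in \cite{DJMMUW20}: for every plane triangulation and every BFS layering $(V_0,V_1,\dots)$ of it (the partition of the vertex set into the sets $V_i=\{v:\dist(r,v)=i\}$ for a fixed root $r$, so every edge joins two vertices in equal or consecutive layers), there is a planar graph $H$ with $\tw(H)\le 3$ and a path $P=p_0p_1\cdots$ such that the triangulation is isomorphic to a subgraph of $H\boxtimes P\boxtimes K_3$ by a map sending each $v\in V_i$ to a triple with $P$-coordinate $p_i$. First I would reduce to the edge-maximal case: since containment is monotone under adding vertices and edges, and every graph of Euler genus at most $g$ is a subgraph of a triangulation $G$ of a surface of Euler genus $g$ (embed it in such a surface and add edges until every face is a triangle), it suffices to prove the statement when $G$ is such a triangulation. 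If $g=0$ then $G$ is planar and \cref{PlanarStructure} gives the conclusion directly, since a planar graph is apex, $\tw\le 3\le 4$, and $K_3=K_{\max\{0,3\}}$. So assume $g\ge 1$.

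Next I would planarise the surface by a single cut. Fix a root $r$, a BFS spanning tree $T$ of $G$, and the corresponding BFS layering $(V_0,V_1,\dots)$. By a classical tree--cotree argument for cellularly embedded graphs (see \cite{MoharThom}), there are at most $g$ non-tree edges $e_1,\dots,e_m$ whose fundamental cycles with respect to $T$ form a cut system of the surface; so, writing $T_W\subseteq T$ for the union of the $r$-to-endpoint paths of $e_1,\dots,e_m$ and $S:=V(T_W)$, cutting the surface along $T_W\cup\{e_1,\dots,e_m\}$ leaves a disk, inside which $G-S$ is drawn, and thus $G-S$ is planar. Moreover $T_W$ is a union of at most $2m\le 2g$ root-paths of the BFS tree, and a root-path meets each layer at most once, so $|S\cap V_i|\le 2g$ for every $i$; this is the origin of the factor $2g$ in $K_{\max\{2g,3\}}$.

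Then I would reassemble the product structure. Applying the layered form of \cref{PlanarStructure} to the planar graph $G-S$ with the layering $(V_i\cap V(G-S))_i$ inherited from $G$ gives an embedding $\psi$ of $G-S$ into $H\boxtimes P\boxtimes K_3$ with $H$ planar, $\tw(H)\le 3$, $P=p_0p_1\cdots$, and the $P$-coordinate of $\psi(v)$ equal to $p_i$ whenever $v\in V_i$. Let $H'$ be $H$ together with a new vertex $a$ adjacent to every vertex of $H$; then $H'$ is apex and $\tw(H')\le\tw(H)+1\le 4$. Put $\ell:=\max\{2g,3\}$, keep $\phi:=\psi$ on $V(G)\setminus S$, and send each $v\in S\cap V_i$ to $\phi(v):=(a,p_i,\iota_i(v))$ for some injection $\iota_i\colon S\cap V_i\hookrightarrow\{1,\dots,\ell\}$ (which exists as $|S\cap V_i|\le 2g\le\ell$); this $\phi$ is injective because the images of $S$ have first coordinate $a\notin V(H)$. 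It remains to check that $\phi$ preserves every edge $uv$ of $G$, say with $u\in V_i$ and $v\in V_j$, where $|i-j|\le 1$: if $u,v\notin S$ this is done by $\psi$; otherwise one of the first coordinates of $\phi(u),\phi(v)$ is $a$, so they are equal or adjacent in $H'$, the $P$-coordinates $p_i,p_j$ are equal or adjacent, and the $K_\ell$-coordinates are equal or adjacent---and distinct, hence adjacent, in the remaining sub-case $u,v\in S$ with $i=j$. Hence $G\subseteq H'\boxtimes P\boxtimes K_{\max\{2g,3\}}$ with $H'$ apex and $\tw(H')\le 4$, as required.

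The two steps that need genuine work are the topological one and the layering interface, and I expect the latter to be the main obstacle. For the former, one must choose the non-tree edges so that deleting $S$ really planarises $G$---equivalently, that the chosen fundamental cycles form a cut system and not merely a homology basis---which is the classical fact that a surface of Euler genus $g$ has a system of $g$ loops through a prescribed vertex whose removal leaves a disk, realisable through a spanning tree by a tree--cotree (greedy homotopy basis) argument. For the latter, the layering that $G-S$ inherits from $G$ is a genuine layering but need not be a BFS layering, so before invoking the layered form of \cref{PlanarStructure} one must confirm that $G-S$ with this layering falls within its scope---or suitably modify $G-S$ inside the disk created by the cut so that it does---while preserving planarity and the treewidth-$3$ bound; this is the most delicate point. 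The rest is routine strong-product bookkeeping.
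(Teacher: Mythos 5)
Your overall strategy is the right one, and it is the same as the one used by \citet{DJMMUW20} (and followed in the proof of \cref{GenusPartitionNewFrame} here): a tree--cotree argument produces $g$ non-tree edges whose fundamental cycles, together with the at most $2g$ root-paths $T_W$, cut the surface open into a disk; the cut set contributes at most $2g$ vertices per BFS layer, and it is absorbed by an apex vertex of $H$. The bookkeeping in your third paragraph is also fine. But the gap you flag in your last paragraph is genuine and is precisely where all the work of the proof lives, so the argument as written is incomplete. The ``layered form'' of \cref{PlanarStructure} is established in \citep{DJMMUW20} only for a plane (near-)triangulation equipped with a BFS layering \emph{of that triangulation}: the parts are tripods, i.e.\ unions of at most three vertical paths in a BFS spanning tree of the plane graph, and the width bound $3$ comes from the fact that a vertical path of a BFS tree meets each BFS layer at most once. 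The layering $(V_i\cap V(G-S))_i$ that $G-S$ inherits from $G$ is not a BFS layering of $G-S$, and a vertical path of a BFS tree of $G-S$ can meet a single inherited layer $V_i$ arbitrarily many times. So applying \cref{PlanarStructure} as a black box to $G-S$ gives you no control on $|A_x\cap V_i|$ and no guarantee that the $P$-coordinate of $\psi(v)$ is $p_i$ for $v\in V_i$; both of these are exactly what your reassembly step needs. (A secondary issue: $G-S$ may be disconnected and is not a near-triangulation, so even the hypotheses of the planar lemma need attention.)

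The way this is actually closed --- both in \citep[Lemma~21]{DJMMUW20} and in the proof of \cref{GenusPartitionNewFrame} above --- is to \emph{not} delete $Z=T_W\cup\{e_1,\dots,e_m\}$ but to cut the surface open along it, so that the doubled copies of $V(Z)$ all lie on one face $F$ of the resulting plane graph $\tilde G$; one then adds an apex $r^+$ inside $F$, triangulates, and, crucially, builds a \emph{specific} rooted spanning tree $T^+$ of this plane graph (the boundary path $P^+$ of $F$, the edge $r^+v^+$, one edge from each component of $T-V(Z)$ back to the boundary, and the forest $T-V(Z)$ itself) with the property that every vertical path of $T^+$ avoiding $V(Z')\cup\{r^+\}$ is a vertical path of the original BFS tree $T$. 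Only then is the planar tripod lemma applied, in the form of \cref{NewDMWv2} (equivalently Lemma~17 of \citep{DJMMUW20}), which takes an \emph{arbitrary} rooted spanning tree with a prescribed vertical path on the outer face as input, rather than a BFS tree. The tripods it returns are then tripods of $T$ away from the cut, hence meet each $V_i$ at most three times, and your apex-and-$K_{\max\{2g,3\}}$ assembly goes through. Without this step --- which is the technical heart of the proof, not a routine modification --- the argument does not compile into a proof.
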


This paper improves this bound on the treewidth of $H$ from 4 to 3.

\begin{thm}
	\label{SurfaceStructureImproved}
	Every graph with Euler genus $g$ is contained in  $H\boxtimes P \boxtimes K_{\max\{2g,3\}}$ for some planar graph $H$ with treewidth 3 and for some path $P$.
\end{thm}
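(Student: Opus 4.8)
The plan is to push the entire topological obstruction into the complete-graph factor while keeping the host graph planar of treewidth $3$, exactly as in \cref{PlanarStructure}. The natural setting is a notion of \emph{framed graph}: a graph embedded in a surface of Euler genus $g$ together with a frame consisting of at most $g$ geodesic cycles sharing a common BFS structure. One proves a product structure theorem at that level of generality and then deduces \cref{SurfaceStructureImproved} (as well as the map-graph and $(g,1)$-planar corollaries) by exhibiting a suitable frame.

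First I would reduce to the case that $G$ triangulates a surface $\Sigma$ of Euler genus $g$, which is harmless since containment is monotone under adding edges. Fix a BFS spanning tree $T$ rooted at an arbitrary vertex, with vertex layers $(V_0,V_1,\dots)$, so every edge of $G$ joins equal or consecutive layers. Using the tree--cotree (Euler) decomposition of the cellular embedding, select exactly $g$ non-tree edges $e_1,\dots,e_g$ whose fundamental cycles $C_1,\dots,C_g$ planarise $\Sigma$: cutting $\Sigma$ along $C_1\cup\dots\cup C_g$ and capping the resulting boundary circles yields the sphere. The crucial quantitative point is that each $C_i$ is formed from two monotone $T$-paths together with a single edge, so $|V(C_i)\cap V_j|\le 2$ for every layer $j$; hence the frame meets each layer in at most $2g$ vertices.

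Next I would cut $G$ open along the frame: duplicate the vertices and edges of each $C_i$, separating the two local sides, to obtain a plane (near-)triangulation $G'$ drawn inside a disk whose boundary polygon consists of copies of the frame paths, and arranged so that the layering inherited from $(V_j)$ is still a genuine BFS layering of $G'$ (this needs a check that the layer of a vertex does not shift in passing to $G'$, which follows from the metric control above, after re-rooting the BFS within each component if necessary). Applying the layered form of \cref{PlanarStructure} --- in which the path $P$ indexes the BFS layers --- gives $G'\subseteq H'\boxtimes P\boxtimes K_3$ for some planar $H'$ with $\tw(H')\le 3$. Finally one recovers $G$ from $G'$ by re-identifying, within each layer, the pairs of frame copies. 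The plan here is to choose the cut so that the two sides $C_i^{L},C_i^{R}$ of each frame cycle are placed in a bounded-width part of the layer structure --- ideally each along a single $H'$-coordinate, with the two coordinates adjacent in $H'$ --- so that re-gluing amounts to contracting $g$ edges of $H'$ (which preserves planarity and does not increase treewidth) together with merging boundedly many clique-fibres; a count over the layers then yields the factor $K_{\max\{2g,3\}}$.

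The main obstacle is this re-gluing step. Carried out naively, cutting along $g$ cycles and pasting back inflates the clique factor to roughly $2g+3$ and can force the host graph to acquire several near-dominating vertices --- which is exactly why \cref{SurfaceStructure} only obtains an apex host of treewidth $4$. The improvement hinges on a careful choice of frame (short geodesic cycles, few vertices per layer) combined with enough control over where the frame lands inside the planar product structure, so that the surgery re-identifying the frame copies costs nothing in treewidth, keeps the host planar, and fits inside the unavoidable $\max\{2g,3\}$ budget. Making this surgery work cleanly, rather than appealing to \cref{PlanarStructure} as a black box, is where the real work lies.
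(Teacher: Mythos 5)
Your first half matches the paper's argument closely: reduce to a triangulation, take a BFS spanning tree $T$, use a tree--cotree decomposition to pick $g$ non-tree edges whose associated cycles (two root-to-vertex paths of $T$ plus one edge each) planarise the surface, observe that this frame meets each BFS layer in at most $2g$ vertices, and cut the embedding open along the frame to obtain a plane graph. All of this is exactly what happens in the proof of \cref{GenusPartitionNewFrame}, which even follows \citet[Lemma~21]{DJMMUW20} verbatim up to this point.

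The gap is the re-gluing step, and you have correctly located it but not closed it. Applying \cref{PlanarStructure} to the cut-open graph as a black box gives you no control whatsoever over which parts of the resulting $H'$-partition contain the two copies $C_i^L,C_i^R$ of each frame vertex; there is no reason they should each occupy ``a single $H'$-coordinate, with the two coordinates adjacent in $H'$''. In general the copies of a single frame vertex can land in parts that are far apart in $H'$, so re-identifying them forces you to merge many parts, which can destroy both planarity and the treewidth bound on the host (and this is essentially why \cref{SurfaceStructure} only achieves an apex host of treewidth $4$). The paper avoids the problem by never invoking \cref{PlanarStructure} as a black box. Instead it uses \cref{NewDMWv2} (from \citet{DMW}), a strengthening of the planar case in which one may \emph{prescribe} a vertical path on the outer face to be a single part of the partition. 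Concretely: an Euler-characteristic count shows that cutting along the union $Z$ of all $g$ frame cycles creates exactly \emph{one} new face $F$, bounded by a cycle $C_F$ consisting of all split vertices; one adds an apex $r^+$ inside $F$, builds a new spanning tree $T^+$ rooted at $r^+$ in which $C_F$ minus one edge is a single vertical path $P^+$, and invokes \cref{NewDMWv2} so that $P^+$ is one part of the $H$-partition. Re-gluing is then trivial --- replace the part $P^+$ by $Z$; the quotient $H$ is unchanged, so it remains planar of treewidth $3$, and since $Z$ is a union of $2g$ vertical paths of the BFS tree it contributes at most $2g$ vertices per layer, giving the factor $K_{\max\{2g,3\}}$. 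Without this prescribed-path lemma (or an equivalent device giving you control over where the cut boundary lands), your surgery plan does not go through.
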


The bound on the treewidth of $H$  in \cref{SurfaceStructureImproved} is optimal since \citet{DJMMUW20} showed that for every integer $\ell\geq 0$ there is a planar graph $G$ such that if $G$ is contained in $H\boxtimes P\boxtimes K_\ell$, then $H$ has treewidth at least 3.

We in fact prove a more general result in terms of so-called framed graphs. Let $G$ be a multigraph embedded in a surface $\Sigma$ without crossings, where each face is bounded by a cycle. For any integer $d\geq 3$, let $G^{(d)}$ be the multigraph embedded in $\Sigma$ obtained from $G$ as follows: for each face $F$ of $G$ bounded by a cycle $C$ of length at most $d$, for all distinct non-adjacent vertices $v,w$ in $C$, add an edge $vw$ across $F$ to $G^{(d)}$.  We say that $G^{(d)}$ is a \defn{$(\Sigma,d)$-framed multigraph} with \defn{frame} $G$. If $\Sigma$ has Euler genus at most $g$, then $G^{(d)}$ is a \defn{$(g,d)$-framed multigraph}.

We prove the following theorem.

\begin{thm}
	\label{FramedProduct}
	For all integers $g\geq 0$ and $d\geq 3$, every $(g,d)$-framed multigraph is contained in $H\boxtimes P\boxtimes K_\ell$  for some planar graph $H$ with treewidth 3 and for some path $P$, where $\ell=\max\{2g\floor{\frac{d}{2}},d+3\floor{\frac{d}{2}}-3\}$. 
\end{thm}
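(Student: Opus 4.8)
The plan is to reduce the general surface case to planar pieces glued in a path-like fashion, using the standard technique of cutting a surface-embedded graph along a bounded number of short non-contractible cycles. First I would prove a "planar core'' version of the theorem: that every $(0,d)$-framed multigraph (i.e.\ one embedded in the sphere with each face bounded by a cycle of length at most $d$, with the diagonals added) is contained in $H\boxtimes P\boxtimes K_{\floor{d/2}\cdot 2}$ — wait, more precisely with $K_{d+3\floor{d/2}-3}$ as the last factor — for some planar $H$ of treewidth $3$. The key idea here, following \citet{DJMMUW20}, is to take a BFS-layering of the frame $G$ from a root vertex $r$; the layers $V_0,V_1,\dots$ give the path $P$. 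Within each layer, and between consecutive layers, the structure is controlled: contracting each connected component of each layer to a single vertex yields a graph whose "vertical'' structure is a tree. The new ingredient needed to get treewidth $3$ rather than $4$ (and $H$ planar rather than apex) is to be more careful with how the BFS tree and the faces interact; for plane triangulations one can use the fact that a plane triangulation minus a vertex, quotiented appropriately by the layering, is an outerplanar-like structure of treewidth at most $3$. I would isolate this as a lemma about plane near-triangulations and then handle the diagonals added to short faces by absorbing them into the $K_\ell$ factor, which is why $\ell$ grows with $d$.

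Second, I would handle the genus. Given a $(g,d)$-framed multigraph $G^{(d)}$ with frame $G$ embedded in $\Sigma$ of Euler genus $g$, I would find a set of at most $O(g)$ faces (or a suitable non-separating curve system) so that cutting along them produces a planar — in fact a $(0, d)$-framed — multigraph, at the cost of duplicating a bounded set of vertices. Concretely, I expect to choose a spanning tree $T$ of $G$, note that there are at most $g$ non-tree edges whose fundamental cycles generate the homology, and then argue that the vertices "near'' these $O(g)$ cycles can be handled by blowing up the $K_\ell$ factor — each such cycle, being short relative to the layering (it can be taken to pass through each BFS layer at most twice, so it meets each layer in at most $2\floor{d/2}$ vertices after accounting for the framing), contributes a multiplicative factor of about $2\floor{d/2}$ to $\ell$, giving the claimed $\ell = \max\{2g\floor{d/2}, d+3\floor{d/2}-3\}$. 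After cutting, the planar case applies to give $H\boxtimes P\boxtimes K_{d+3\floor{d/2}-3}$, and combining the two sources of "extra columns'' — the $O(g)$ cut-cycle vertices and the planar framing — yields the final bound by taking a common refinement of the two $K$-factors (using $K_a\boxtimes K_b \subseteq K_{ab}$, or rather a direct product-of-colourings argument).

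The technical heart, and the step I expect to be the main obstacle, is the planar case with treewidth exactly $3$: showing that the quotient of a plane near-triangulation under a BFS-layering has treewidth at most $3$ and is itself planar. The naive approach gives treewidth $4$ (this is essentially what \citet{DJMMUW20} do, getting an apex graph $H$). To shave off the extra unit, I would decompose each layer not just into its connected components but track, for each component, how it attaches to the previous layer via a path (or a small number of vertices) in the BFS tree; the resulting "layered tree-decomposition'' of $H$ should have bags of size $4$, i.e.\ width $3$, provided one chooses the BFS tree and the planar embedding compatibly — e.g.\ using a canonical ordering or a well-chosen rooted embedding so that each component of each layer sees only an interval of the previous layer. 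Verifying planarity of $H$ simultaneously requires exhibiting $H$ as a minor of (a subdivision of) the frame, which should follow from the construction since contractions and the layering respect the embedding. I would structure the write-up by first stating and proving this planar lemma in full, then deriving \cref{FramedProduct} by the cutting argument above, and finally noting that \cref{SurfaceStructureImproved} follows since every graph of Euler genus $g$ is a subgraph of a $(g,3)$-framed multigraph (a triangulation), for which $\ell = \max\{2g, 3\}$.
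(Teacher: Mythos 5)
Your planar step is not where the difficulty lies: the statement you call the ``technical heart'' (an $H$-partition of a plane near-triangulation into vertical paths of a BFS-tree with $H$ planar of treewidth $3$) is already known --- it is the Planar Graph Product Structure Theorem of \citet{DJMMUW20}, and the version needed here for $d$-framed faces is \cref{NewDMWv2}, quoted from \citet{DMW}. The genuinely new content of the theorem is the genus reduction, and that is where your sketch has a gap. After cutting $G$ along the subgraph $Z$ formed by the $2g$ vertical paths to the endpoints of the $g$ non-tree dual edges, each vertex of $Z$ is \emph{duplicated} into several copies, all lying on the boundary cycle of a single new face of the planar graph $\tilde{G}$. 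If you simply apply the planar result to $\tilde{G}^{(d)}$ and then re-identify the copies, the copies of a given vertex generally land in different parts of the $H$-partition; identifying them forces you to merge those parts, which changes the quotient $H$ and destroys both planarity and the treewidth bound. ``Blowing up the $K_\ell$ factor'' cannot repair this, because $\ell$ only controls how many vertices of a part lie in one layer, not the adjacency structure of the quotient. Likewise, combining the cut-cycle vertices and the planar partition via $K_a\boxtimes K_b\subseteq K_{ab}$ would give a product bound, whereas the claimed $\ell$ is a \emph{maximum}: the $2g\floor{\frac d2}$ and the $d+3\floor{\frac d2}-3$ must come from two different kinds of parts of one and the same partition.

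The missing idea is to force all the duplicated vertices into a \emph{single} part before applying the planar lemma. In \cref{GenusPartitionNewFrame} this is done by adding a vertex $r^+$ inside the new face, choosing a Hamiltonian path $P^+$ on the boundary cycle $C_F$, and building a fresh spanning tree $T^+$ of $G^+$ rooted at $r^+$ whose first vertical path is exactly $P^+$, with the rest of $T^+$ hanging off $P^+$ via the old tree $T-V(Z)$. Then \cref{NewDMWv2} is invoked in the form that guarantees $P^+$ itself is one part of the resulting $H$-partition; replacing the part $P^+$ by $Z$ recovers an $H$-partition of $G^{(d)}$ with the same planar, treewidth-$3$ quotient, in which the exceptional part $Z$ is a union of at most $2g$ vertical paths of the original BFS-tree and every other part is at most $d-3$ extra vertices plus at most three vertical paths. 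Grouping BFS layers in blocks of $\floor{\frac d2}$ then yields the stated width. Without some device of this kind (a designated boundary part that survives the planar decomposition intact), your cutting argument does not go through.
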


Framed graphs (for $g=0$) were introduced by \citet{BDGGMR20} and are useful because they include several interesting graph classes, as shown by the following three examples.

First, every graph with Euler genus $g$ is a subgraph of a $(g,3)$-framed multigraph. Thus \cref{FramedProduct} with $d=3$ implies \cref{SurfaceStructureImproved}.

Now consider map graphs. Start with a graph $G$ embedded in a surface $\Sigma$ without crossings, with each face labelled a `nation' or a `lake', where each vertex of $G$ is incident with at most $d$ nations. Let $M$ be the graph whose vertices are the nations of $G$, where two vertices are adjacent in $G$ if the corresponding faces in $G$ share a vertex. Then $M$ is called a \defn{$(\Sigma,d)$-map graph}. If $\Sigma$ has Euler genus at most $g$, then $M$ is called a \defn{$(g,d)$-map graph}. Graphs embeddable in $\Sigma$ are precisely the $(\Sigma,3)$-map graphs~\citep{DEW17}. So map graphs are a natural generalisation of graphs embeddable in surfaces.

We show that every $(\Sigma,d)$-map graph is a spanning subgraph of $G^{(d)}$ for some multigraph $G$ embedded in $\Sigma$ without crossings; see \cref{MapGraphFramed}. Thus \cref{FramedProduct} implies that $(g,d)$-map graphs have the following product structure.

\begin{thm}\label{MapGraphs}
	Every $(g,d)$-map graph is contained in  $H\boxtimes P\boxtimes K_\ell$ for some planar graph $H$ with treewidth 3 and for some path $P$, where $\ell=\max\{2g\floor{\frac{d}{2}},d+3\floor{\frac{d}{2}}-3\}$. 
\end{thm}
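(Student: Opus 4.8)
The plan is to deduce the theorem from \cref{FramedProduct} by establishing the following intermediate claim: every $(g,d)$-map graph $M$ is a spanning subgraph of $G^{(d)}$ for some multigraph $G$ embedded in a surface of Euler genus at most $g$ with every face bounded by a cycle. Given this, $G^{(d)}$ is a $(g,d)$-framed multigraph, so \cref{FramedProduct} gives $G^{(d)}\sse H\boxtimes P\boxtimes K_\ell$ for a planar graph $H$ with treewidth $3$, a path $P$, and $\ell=\max\{2g\floor{d/2},\,d+3\floor{d/2}-3\}$; since $M\sse G^{(d)}$, the theorem follows, with the value of $\ell$ as claimed.

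To prove the claim, let $M$ be a $(\Sigma,d)$-map graph witnessed by a multigraph $G_0$ embedded in $\Sigma$ with each face labelled a nation or a lake and each vertex incident to at most $d$ nations. After a standard normalization of the map (discussed below) we may assume $G_0$ is $2$-connected with a cellular embedding, so that every face of $G_0$ is bounded by a cycle; nations all of whose boundary vertices meet no other nation are isolated vertices of $M$ and are handled trivially. Let $G$ be the multigraph with one vertex $u_F$ for each nation $F$ (so $V(G)=V(M)$), drawn with $u_F$ at an interior point of the face $F$. For each vertex $v$ of $G_0$, list the nations incident to $v$ in cyclic order as $F_1,\dots,F_k$; these are distinct since each face of $G_0$ is a cycle, and $2\le k\le d$ (if $v$ lies on at most one nation we add nothing for $v$). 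Add to $G$ the cycle $C_v=u_{F_1}u_{F_2}\cdots u_{F_k}$, routing each edge $u_{F_j}u_{F_{j+1}}$ radially through $F_j$ to near $v$, then around $v$ from the corner of $F_j$ to the corner of $F_{j+1}$ through any intervening lake corners, then radially through $F_{j+1}$ to $u_{F_{j+1}}$. Doing this for every $v$ yields a crossing-free embedding of $G$ in $\Sigma$ in which the region enclosed by $C_v$ on the side of $v$ is a face $O_v$ bounded by the cycle $C_v$, of length $k\le d$. (The multigraph $G$ may have digons -- $C_v$ when $k=2$, across which $G^{(d)}$ adds nothing -- and parallel edges coming from different vertices $v$.) Now if $FF'\in E(M)$ then some vertex $v$ of $G_0$ is incident to both $F$ and $F'$, so $u_F$ and $u_{F'}$ lie on the cycle $C_v$, of length at most $d$, bounding the face $O_v$; hence $u_Fu_{F'}\in E(G^{(d)})$. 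Since $V(G)=V(M)$, this gives $M\sse G^{(d)}$.

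The main obstacle is the normalization step together with the verification that \emph{every} face of $G$ is bounded by a cycle, not just the faces $O_v$. The other faces of $G$ arise from the faces of $G_0$ by cutting each nation $F$ along the radial spokes at $u_F$ and gluing along the (now absent) edges of $G_0$; showing that their boundary walks are always cycles is exactly where one needs $G_0$ to be $2$-connected with every face a cycle. I expect that carrying out the required normalization -- deleting or suppressing vertices incident to too few nations, filling in lakes, and subdividing, all without changing $M$ and without creating a vertex on more than $d$ nations -- is the delicate part; once the frame $G$ is in place, the reduction to \cref{FramedProduct}, the value of $\ell$, and the inclusion $M\sse G^{(d)}$ are all routine.
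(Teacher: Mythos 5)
Your reduction to \cref{FramedProduct} is exactly the paper's route (via its \cref{MapGraphFramed}), but your construction of the frame $G$ has a genuine gap that you yourself flag: you never verify that \emph{every} face of $G$ is bounded by a cycle, and with your construction this is actually false in simple cases, not merely delicate. Your $G$ consists only of the nation vertices $u_F$ together with the cycles $C_v$; you discard the lakes and all of the dual edges of $G_0$. Take three nations $F,F_1,F_2$ where $F_1$ meets $F$ only at a vertex $v$, $F_2$ meets $F$ only at a vertex $w\ne v$, and $F_1,F_2$ are disjoint. Then $M$ is a path, $G$ is two digons sharing the cut vertex $u_F$, and the face of $G$ containing the rest of the surface has a boundary walk visiting $u_F$ twice --- it is not a cycle, so $G$ is not a legal frame for a $(\Sigma,d)$-framed multigraph. (Disconnected map graphs give an even quicker failure.) No amount of normalizing $G_0$ fixes this, because the defect is created by deleting the dual structure, not by any pathology of $G_0$.

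The paper sidesteps the issue entirely: after making $G_0$ edge-maximal (so that every face of $G_0$, and hence every face of the dual $G_0^*$, is bounded by a cycle), it takes the frame $G$ to be the \emph{whole dual multigraph} $G_0^*$ --- lake vertices included --- plus the nation-cycle edges, which are then just non-crossing chords added inside faces of $G_0^*$ that are already bounded by cycles. Adding such chords manifestly preserves the property that every face is bounded by a cycle, and the nation cycle of each vertex $x$ of $G_0$ becomes a facial cycle of length at most $d$, so the inclusion $E(M)\subseteq E(G^{(d)})$ goes through exactly as in your last step. This costs nothing, since \cref{MapGraphFramed} only needs $M$ to be a subgraph of $G^{(d)}$, not a spanning one, so the extra lake vertices are harmless. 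To repair your argument, either adopt the paper's frame or actually carry out a normalization that restores cycle-bounded faces; as written, the key step is missing.
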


A graph is \defn{$k$-planar} if it has an embedding in the plane where each edge is involved in at most $k$ crossings. This definition has a natural extension for other surfaces $\Sigma$. A graph is \defn{$(\Sigma, k)$-planar} if it has an embedding in $\Sigma$ where each edge is involved in at most $k$ crossings. A graph is \defn{$(g,k)$-planar} if it is $(\Sigma,k)$-planar for some surface $\Sigma$ with Euler genus at most $g$. In the planar setting ($g=0$), these graphs have been extensively studied; see \cite{KLM17,DLM19} for  surveys. 

We show that every $(\Sigma,1)$-planar graph is contained in $G^{(4)}$ for some multigraph $G$ embedded in $\Sigma$ without crossings; see \cref{1PlanarFramed}. Thus \cref{FramedProduct} implies the following product structure theorem.

\begin{thm}\label{1PlanarGraphs}
	Every $(g,1)$-planar graph is contained in $H\boxtimes P\boxtimes K_{\max\{4g,7\}}$ for some planar graph $H$ with treewidth 3 and for some path $P$.
\end{thm}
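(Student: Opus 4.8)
The plan is to derive \cref{1PlanarGraphs} from \cref{FramedProduct} by way of \cref{1PlanarFramed}, whose content is that every $(\Sigma,1)$-planar graph is contained in $G^{(4)}$ for some multigraph $G$ embedded in $\Sigma$ without crossings and with every face bounded by a cycle. Once that is in hand the theorem follows quickly: if $G_0$ is $(g,1)$-planar then it is $(\Sigma,1)$-planar for some surface $\Sigma$ of Euler genus at most $g$, so the resulting multigraph $G$ is embedded in $\Sigma$ and $G^{(4)}$ is a $(g,4)$-framed multigraph. Applying \cref{FramedProduct} with $d=4$, and noting $\floor{\frac{4}{2}}=2$ so that $\ell=\max\{2g\cdot 2,\,4+3\cdot 2-3\}=\max\{4g,7\}$, we get $G_0\subseteq G^{(4)}\subseteq H\boxtimes P\boxtimes K_{\max\{4g,7\}}$ for some planar $H$ with $\tw(H)\le 3$ and some path $P$, as required.

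To prove \cref{1PlanarFramed} I would start from a drawing of $G_0$ in $\Sigma$ with at most one crossing per edge and first clean it up so that it is \emph{good}: no edge crosses itself, adjacent edges do not cross, two edges cross at most once, no crossing point is a vertex, and no three edges meet at a point. Since every edge is crossed at most once, each crossing point $p$ then lies on exactly two edges $e=v_1v_3$ and $f=v_2v_4$, with $v_1,v_2,v_3,v_4$ distinct and appearing in this cyclic order around $p$, and the four half-edges $v_1p,v_2p,v_3p,v_4p$ crossing-free. I would then build a crossing-free multigraph $G$ in $\Sigma$ by keeping every crossing-free edge of $G_0$ as drawn, deleting every crossing edge, and, for each crossing point $p$, adding the four edges $v_1v_2,v_2v_3,v_3v_4,v_4v_1$, each routed in a thin corridor alongside the two relevant half-edges at $p$ and passing just to one side of $p$. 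With narrow enough corridors and disjoint neighbourhoods of distinct crossings, the new edges introduce no crossings, and around each $p$ they bound a quadrilateral face $F_p$ with boundary $4$-cycle $v_1v_2v_3v_4$. Finally I would add further edges so that every face other than the $F_p$'s is bounded by a triangle while leaving the $F_p$'s as $4$-faces, which is possible by standard drawing arguments and only enlarges $G^{(4)}$. The verification that $G_0\subseteq G^{(4)}$ is then immediate: each crossing-free edge of $G_0$ lies in $G\subseteq G^{(4)}$, and for each crossing point $p$ the two diagonals of the $4$-face $F_p$, namely $v_1v_3=e$ and $v_2v_4=f$, lie in $G^{(4)}$ (even if they happen already to be edges of $G$); since $V(G)=V(G_0)$ this gives $G_0\subseteq G^{(4)}$.

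I expect the only real work to be the embedding surgery in the previous paragraph, and in particular the claim that the four new edges around each crossing can be inserted without creating crossings --- this is exactly where one uses that the half-edges at $p$ are crossing-free and that distinct crossing points have disjoint neighbourhoods --- together with checking that the $F_p$ are genuine faces with the stated boundary. Making every remaining face cycle-bounded is routine but needs a little care when $G$ is disconnected, has a cut vertex, or is not cellularly embedded, where one inserts suitable parallel or auxiliary edges in the offending face. All of the conceptual difficulty of \cref{1PlanarGraphs} is carried by \cref{FramedProduct}.
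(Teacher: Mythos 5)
Your derivation of \cref{1PlanarGraphs} from \cref{1PlanarFramed} and \cref{FramedProduct} (with $d=4$, giving $\ell=\max\{4g,7\}$) is exactly the paper's, and your overall architecture is the same. Where you differ is in the proof of \cref{1PlanarFramed} itself. The paper does not perform local surgery at each crossing: it first extends the drawing to an \emph{edge-maximal} multigraph $G'$ embedded in $\Sigma$ subject to (i) at most one crossing per edge, (ii) no two edges at a common vertex crossing, and (iii) no face bounded by two parallel edges, and then argues that maximality forces each crossing pair $vw$, $xy$ to be enclosed by an empty $4$-cycle $(v,x,w,y)$ already present in $G'$, and forces every other face of $G_0$ (the graph obtained by deleting the crossing pairs) to be a triangle. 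Your approach instead deletes the crossing edges and explicitly routes the four quadrilateral edges in thin corridors along the crossing-free half-edges; this is sound (the half-edges are crossing-free precisely because each edge is crossed at most once, and a regular neighbourhood of the ``X'' formed by the two crossing edges is a disc), and your observation that the diagonals of $F_p$ land in $G^{(4)}$ whether or not they are already edges of $G$ correctly handles the ``non-adjacent'' clause in the definition of $G^{(d)}$. The trade-off is that your route leaves a residual step --- making every remaining face bounded by a cycle --- which you rightly flag as delicate on surfaces (disconnected graphs, non-cellular embeddings, repeated vertices on facial walks); this is exactly the step the paper's edge-maximality device discharges for free, and if you wanted to carry it out by hand you would need the kind of case analysis the paper performs in its proof of \cref{MapGraphFramed}. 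One cosmetic point: \cref{1PlanarFramed} as stated assumes at least three vertices, and the smaller cases should be noted as trivial when deducing \cref{1PlanarGraphs}.
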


\citet{DMW} proved that every $(g,k)$-planar graph is contained in $H \boxtimes P \boxtimes K_{\ell}$, for some graph $H$ with treewidth  $\binom{k+4}{3}-1$ where $\ell=\max\{2g,3\}(6k^2+16k+10)$. \citet{HW21b}  improved $\ell$ to $2\max\{2g,3\}(k+1)^2$. In the $k=1$ case, \cref{1PlanarGraphs} is significantly stronger than both these results since $H$ has treewidth $3$ instead of treewidth $9$. As mentioned above, treewidth 3 is best possible, even for planar graphs~\citep{DJMMUW20}. Note that \citet{DMW} previously proved \cref{1PlanarGraphs} in the planar case ($g=0$), and a similar result was independently obtained by \citet{BDHK}.

\section{\large Proofs}

Undefined terms and notation can be found in Diestel's text~\citep{Diestel5}. A \defn{partition} of a graph $G$ is a set $\Pcal$ of non-empty sets of vertices in $G$ such that each vertex of $G$ is in exactly one element of $\Pcal$. Each element of $\Pcal$ is called a \defn{part}. The \defn{quotient} of $\Pcal$ is the graph, denoted by \defn{$G/\Pcal$}, with vertex set $\Pcal$ where distinct parts $A,B\in \mathcal{P}$ are adjacent in $G/\Pcal$ if and only if some vertex in $A$ is adjacent in $G$ to some vertex in $B$. An \defn{$H$-partition} of $G$ is a partition $\Pcal=(A_x:x \in V(H))$ where $H\cong G/\Pcal$. For simplicity, we sometimes abuse notation and say $J \in \Pcal$ where $J$ is a subgraph of $G$ with $V(J) \in \Pcal$. 

If $T$ is a tree rooted at a vertex $r$, then a non-empty path $P$ in $T$ is \defn{vertical} if the vertex of $P$ closest to $r$ in $T$ is an end-vertex of $P$. If $T$ is a rooted spanning tree in a graph $G$, then a \defn{tripod} in $G$ (with respect to $T$) consists of up to three pairwise vertex-disjoint vertical paths in $T$ whose lower end-vertices form a clique in $G$. 

A \defn{layering} of a graph $G$ is an ordered partition $\mathcal{L}:=(L_0,L_1,\dots)$ of $V(G)$ such that for every edge $vw \in E(G)$, if $v \in L_i$ and $w \in L_j$, then $|i-j|\leq 1$. A \defn{layered partition} $(\Pcal,\mathcal{L})$ of a graph $G$ consists of a partition $\Pcal$ and a layering~$\mathcal{L}$ of $G$.  If $\Pcal=(A_x:x\in V(H))$ is an $H$-partition, then $(\Pcal,\mathcal{L})$ is a \defn{layered $H$-partition} with \defn{width} $\max\{|A_x\cap L|:x\in V(H), L \in \mathcal{L}\}$. 
Layered partitions were introduced by \citet{DJMMUW20} who observed the following connection to strong products (which follows directly from the definitions).

\begin{obs}[\cite{DJMMUW20}]\label{VerticalPathsLemma}
	For all graphs $G$ and $H$, 
	$G$ is contained in $H\boxtimes P\boxtimes K_{\ell}$ for some path $P$ if and only if $G$ has a layered $H$-partition $(\Pcal,\mathcal{L})$ with width at most $\ell$. 
\end{obs}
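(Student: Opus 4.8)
The plan is to prove the equivalence directly from the definitions of the three notions involved: the strong product, an $H$-partition, and a layering. The key observation underpinning both directions is that an edge of $H\boxtimes P\boxtimes K_\ell$ may only join two triples that, in each of the three coordinates, are either equal or adjacent (and are not equal in all three coordinates). Translating this coordinate-by-coordinate is exactly what converts between a subgraph embedding and a layered partition. Throughout I use the standard convention that ``$G$ has a layered $H$-partition'' allows $G/\Pcal$ to be isomorphic to a subgraph of $H$ (equivalently, one permits empty parts); this is harmless for all applications in this paper, since a subgraph of a planar graph of treewidth at most $3$ is again planar of treewidth at most $3$.

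For the ``only if'' direction, I would fix an embedding of $G$ as a subgraph of $H\boxtimes P\boxtimes K_\ell$, so that each vertex $v$ of $G$ is a triple $(\alpha(v),\beta(v),\gamma(v))\in V(H)\times V(P)\times V(K_\ell)$, and write $p_0,p_1,\dots$ for the vertices of $P$ in path order. I would take the partition $\Pcal$ to group vertices of $G$ by their first coordinate, $A_x:=\{v\in V(G):\alpha(v)=x\}$ for $x\in V(H)$ (discarding empty classes), and the layering $\mathcal{L}=(L_0,L_1,\dots)$ to group them by their second coordinate, $L_i:=\{v\in V(G):\beta(v)=p_i\}$. Equal-or-adjacency in the $P$-coordinate of the endpoints of any edge of $G$ gives that $\mathcal{L}$ is a layering; equal-or-adjacency in the $H$-coordinate gives that distinct adjacent parts $A_x,A_y$ satisfy $xy\in E(H)$, so $G/\Pcal$ is (isomorphic to) a subgraph of $H$; and since two distinct vertices of $G$ lying in the same part $A_x$ and the same layer $L_i$ must receive distinct $K_\ell$-coordinates, we get $|A_x\cap L_i|\le\ell$, i.e.\ the layered $H$-partition has width at most $\ell$.

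For the ``if'' direction, suppose $G$ has a layered $H$-partition $(\Pcal,\mathcal{L})$ of width at most $\ell$, with $\Pcal=(A_x:x\in V(H))$ and $\mathcal{L}=(L_0,L_1,\dots)$. Let $P$ be the path with one vertex $p_i$ for each layer $L_i$, in order. For each pair $(x,i)$, since $|A_x\cap L_i|\le\ell$, I would fix an injection $c_{x,i}\colon A_x\cap L_i\to V(K_\ell)$, and then define $\phi\colon V(G)\to V(H)\times V(P)\times V(K_\ell)$ by $\phi(v):=(x,p_i,c_{x,i}(v))$ for $v\in A_x\cap L_i$. Injectivity of $\phi$ is immediate from injectivity of each $c_{x,i}$. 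Given an edge $vw$ of $G$ with $v\in A_x\cap L_i$ and $w\in A_y\cap L_j$: the layering property gives $|i-j|\le 1$, so the second coordinates of $\phi(v),\phi(w)$ are equal or adjacent in $P$; if $x\ne y$ then $A_x,A_y$ are adjacent in $G/\Pcal$, hence $xy\in E(H)$; if $x=y$ and $i=j$ then $v\ne w$ lie in the same part and layer, hence $c_{x,i}(v)\ne c_{x,i}(w)$, so the third coordinates are adjacent in $K_\ell$. In every case the three coordinates of $\phi(v)$ and $\phi(w)$ are coordinatewise equal or adjacent, and $\phi(v)\ne\phi(w)$, so $\phi(v)\phi(w)\in E(H\boxtimes P\boxtimes K_\ell)$. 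Thus $\phi$ realises $G$ as a subgraph of $H\boxtimes P\boxtimes K_\ell$.

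I do not anticipate any real obstacle: the statement genuinely ``follows directly from the definitions'', and the proof is pure bookkeeping. The only points needing a little care are the convention on empty parts (addressed by the one-line remark above) and keeping the three coordinates of the strong product straight when verifying the edge condition in the ``if'' direction; I would also note that since containment for a multigraph $G$ is defined via its underlying simple graph, there are no loops or parallel edges to worry about.
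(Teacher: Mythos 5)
Your proof is correct and is exactly the direct-from-definitions verification that the paper (following \citet{DJMMUW20}) intends when it states this as an observation ``which follows directly from the definitions,'' giving no separate proof. The coordinate-by-coordinate bookkeeping in both directions, together with your explicit remark on the empty-parts/subgraph convention for $H$-partitions, is the standard argument and needs no changes.
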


We need the following lemma of \citet{DMW}, which is a special case of their Lemma~24 (which is an extension of Lemma~17 from \citep{DJMMUW20}). 

\begin{lem}[\citep{DMW}]\label{NewDMWv2}
	Let $G^+$ be a plane multigraph in which each face of $G^+$ is bounded by a cycle with length in $\{3,\dots,d\}$. Let $T$ be a spanning tree of $G^+$ rooted at some vertex $r$ on the boundary of the outer-face of $G^+$. Assume there is a vertical path $P$ in $T$ with end-vertices $p_1$ and $p_2$ such that the cycle $C$ obtained from $P$ by adding the edge $p_1p_2$ is a subgraph of $G^+-r$. Let $G$ be the plane graph consisting of all the vertices and edges of $G^+$ contained in $C$ and the interior of $C$. Then $G^{(d)}$ has an $H$-partition $\Pcal$ such that $P\in\Pcal$ and each part $S_i \in \Pcal\setminus\{P\}$ has a partition $\{X_i,Y_i\}$ where $|X_i|\leq d-3$ and $Y_i$ is the union of at most three vertical paths in $T$, and $H$ is planar with treewidth at most 3.
\end{lem}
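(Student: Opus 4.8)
The plan is to prove a stronger, ``glueable'' statement by induction on the number of internal faces of $G$. The strengthened statement concerns an arbitrary cycle $D$ of $G^+-r$ that bounds a closed disc $\Delta$ with $r\notin\Delta$ and that is the union of at most three vertical paths of $T$ together with at most three further edges of $G^+$. Writing $G_\Delta$ for the subgraph of $G^+$ drawn in $\Delta$, it asserts that $G_\Delta^{(d)}$ has an $H_\Delta$-partition $\Pcal_\Delta$ in which the at most three vertical paths forming $D$ are parts, every other part is $X_i\cup Y_i$ with $|X_i|\le d-3$ and $Y_i$ a union of at most three vertical paths of $T$, and $H_\Delta$ is planar with $\tw(H_\Delta)\le 3$; moreover it records the data needed to combine such partitions, namely that the parts meeting $D$ form a clique of size at most three in $H_\Delta$, lie together in one bag of a width-$3$ tree-decomposition of $H_\Delta$, and occur on the outer face of a planar drawing of $H_\Delta$ in the cyclic order they have along $D$. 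The lemma is the case $D=C$: here $P$ is the one vertical path, $p_1p_2$ the one joining edge, and $r\notin\Delta$ because $r\notin C$ lies on the outer face of $G^+$.

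For the base case $D$ bounds a single face, so $|D|\le d$, the graph $G_\Delta^{(d)}$ is a clique on $V(D)$, and partitioning $V(D)$ into the at most three vertical paths of $D$ gives a quotient that is a clique on at most three vertices, for which every requirement is immediate. For the inductive step, pick a joining edge $e$ of $D$ and let $F_0$ be the internal face of $G_\Delta$ at $e$, with boundary cycle $C_0$ of length $\le d$; since we are not in the base case, $C_0$ has a vertex off $D$. From a carefully chosen set of at most three of the vertices of $C_0\setminus D$, grow vertical paths of $T$ towards $r$, each truncated just before its first vertex on $D$ --- every such route leaves $\Delta$, so each truncation is a vertical path inside $\Delta$ ending just below a vertex of $D$. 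Let the new part $S_0$ have tripod $Y$ the union of these at most three paths and $X$-set the remaining at most $|C_0|-3\le d-3$ vertices of $C_0\setminus D$. Deleting $Y$ from $\Delta$ (and declaring the vertices of $X$ to belong to $S_0$) cuts $\Delta$ into a bounded number of sub-discs, each bounded by a cycle assembled from sub-paths of $D$, the paths of $Y$, joining edges, and edges and chords of $C_0$ present in $G_\Delta^{(d)}$; the selection is arranged so that each such boundary meets at most three parts. Apply the induction hypothesis to each sub-disc, identify across sub-discs the at most three parts covering any shared boundary, and add $S_0$ as a new part adjacent only to the at most three boundary parts it attaches to --- these already form a clique, so $\tw(H_\Delta)\le 3$ is preserved --- gluing the sub-disc drawings and tree-decompositions along boundary arcs and common bags, which preserves planarity and the width bound.

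The crux is the bookkeeping in the inductive step: one must select which vertices of $C_0\setminus D$ become legs of $Y$ and which go into $X$ so as to \emph{simultaneously} respect the at-most-three-legs and at-most-$(d-3)$-absorbed budgets \emph{and} guarantee that every resulting sub-disc meets at most three parts forming a clique in the quotient. A naive choice fails --- for instance when $F_0$ meets $D$ in more than one edge, when the vertical paths grown from $C_0$ merge or interleave, or when $e$ simply does not ``see'' a suitable tripod --- and one must instead locate a deeper tripod by a Sperner-type argument, colouring the faces of $\Delta$ by which of the at most three boundary paths the tree-route from that face exits through. This is exactly the mechanism of the plane near-triangulation case $d=3$ (the tripod decomposition underlying \cref{PlanarStructure}, where $X_i=\emptyset$); the new content is to carry the $d-3$ units of ``absorption slack'' through that argument --- using it to soak up the excess vertices of long faces of $G_\Delta$ so as to keep every sub-disc bounded by at most three parts --- without damaging planarity of $H$ or the bound $\tw(H)\le 3$. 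Re-establishing the three structural add-ons (boundary clique, shared bag, outer-face cyclic order) after each gluing is then routine.
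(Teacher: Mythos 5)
The first thing to note is that the paper does not prove \cref{NewDMWv2} at all: it is imported as a special case of Lemma~24 of \citep{DMW}, itself an extension of Lemma~17 of \citep{DJMMUW20}. Your proposal is therefore measured against the proof in those references, and at the level of strategy it matches them: the strengthened disc statement (boundary equal to a union of at most three vertical paths plus at most three joining edges, with the boundary parts forming a clique lying in a common bag and on the outer face of the drawing of $H$), induction on the number of internal faces, a new part consisting of a tripod of at most three vertical legs plus at most $d-3$ absorbed vertices of one long face, and gluing of the sub-disc partitions. That is indeed how the cited lemma is proved.

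The difficulty is that the decisive step is named but never executed. The inductive step you actually write down --- take the face $F_0$ at a joining edge of $D$ and grow legs from vertices of $C_0\setminus D$ --- is, as you yourself concede, the naive choice that fails, and you then defer to ``a Sperner-type argument \dots exactly the mechanism of the plane near-triangulation case''. But that mechanism \emph{is} the content of the lemma. One must (i) colour each interior vertex $v$ by the index of the boundary path at which the vertical path of $T$ from $v$ towards $r$ first meets $D$ (it is vertices, not faces, that are coloured, and a trichromatic \emph{internal face} is then produced by Sperner's lemma); (ii) verify that the three legs grown from a trichromatic face are pairwise vertex-disjoint (this uses the colouring: two legs merging at an interior vertex would force equal colours) and internally disjoint from $D$; (iii) check that deleting the tripod leaves sub-discs each of whose boundaries is again a union of at most three vertical paths and at most three joining edges, meeting at most three parts; and (iv) for $d>3$, check that the up to $d-3$ absorbed vertices of the chosen face can be placed in the tripod's part without any sub-disc boundary acquiring a fourth part. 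None of (i)--(iv) appears in the proposal, and the degenerate cases (a boundary path being empty, or no interior vertex existing) are unaddressed. As written, the argument reduces \cref{NewDMWv2} to the tripod decomposition underlying \cref{PlanarStructure} together with unspecified bookkeeping, i.e.\ it assumes the substance of what is to be proved rather than supplying it.
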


The next lemma is the heart of our proof.

\begin{lem}
	\label{GenusPartitionNewFrame}
	Let $G$ be a connected multigraph embedded in a surface of Euler genus $g$ without crossings, where each face of $G$ is bounded by a cycle. Then for every spanning tree $T$ of $G$ and every integer $d\geq 3$, $G^{(d)}$ has an $H$-partition $\Pcal$ such that one part $Z\in \Pcal$ is the union of at most $2g$ vertical paths in $T$ and each part $S_i \in \Pcal\setminus\{Z\}$ has a partition $\{X_i,Y_i\}$ where $|X_i|\leq d-3$ and $Y_i$ is the union of at most three vertical paths in $T$, and $H$ is planar with treewidth at most 3. 
\end{lem}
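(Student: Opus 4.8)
The plan is to reduce the surface case to the planar case handled by \cref{NewDMWv2} by cutting the surface open along a suitable collection of curves. First I would invoke the standard fact that a graph $G$ embedded in a surface $\Sigma$ of Euler genus $g$ without crossings, with all faces bounded by cycles, has a collection of at most $g$ edge-disjoint noncontractible cycles (or more precisely, a system of $O(g)$ curves) whose removal cuts $\Sigma$ open into a disk; concretely, I want a set $W$ of at most $2g$ vertices, arising as the vertex sets of at most $g$ well-chosen $T$-paths, such that after duplicating these vertices and cutting along the corresponding $T$-paths we obtain a \emph{planar} multigraph $G^+$. The cleanest way to arrange this is to pick, for the given spanning tree $T$ rooted at some vertex $r$, a set of at most $g$ non-tree edges $e_1,\dots,e_g$ such that $T$ together with these edges still embeds in $\Sigma$ but cutting along the fundamental cycle of each $e_i$ yields a disk. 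Each fundamental cycle is $P_i \cup e_i$ where $P_i$ is a \emph{vertical} path in $T$ (after re-rooting appropriately, or by splitting $P_i$ at its topmost vertex into two vertical paths); this is where the bound ``at most $2g$ vertical paths'' for the special part $Z$ comes from.

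Next I would perform the cutting: let $Z$ be the union of these at most $2g$ vertical paths, and form the plane multigraph $G^+$ by slicing $\Sigma$ along the $P_i$'s, which splits each vertex of $Z$ into (at most $g+1$, but generically two) copies lying on the boundary of the resulting disk. After this operation every face of $G^+$ is still bounded by a cycle, and — crucially — faces have the same boundary length as before, so if $G$'s faces have length at most $d$ then so do $G^+$'s (the new outer face, bounded by the cut, need not have length $\le d$, so I must be slightly careful and possibly triangulate or otherwise handle the outer face, or apply \cref{NewDMWv2} to the subgraph bounded by the appropriate cycle). I then choose the root of $G^+$ to be a copy of $r$ on the outer boundary, observe that $T$ lifts to a spanning tree $T^+$ of $G^+$ (the cuts are along paths of $T$, and duplicating their vertices keeps $T$ connected as long as we re-attach the pieces appropriately — again a point requiring care), and apply \cref{NewDMWv2} to $G^+$, $T^+$, and the vertical path bounding the outer cycle. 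This yields an $H$-partition of $(G^+)^{(d)}$ of the desired form: one part equal to a vertical path and all others split as $\{X_i,Y_i\}$ with $|X_i|\le d-3$ and $Y_i$ a union of $\le 3$ vertical paths, with $H$ planar of treewidth $\le 3$.

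Finally I would transfer the partition back to $G$. Contracting the duplicated copies of each vertex of $Z$ back to a single vertex maps the $H$-partition of $(G^+)^{(d)}$ to an $H$-partition of $G^{(d)}$: I merge all parts containing a copy of a $Z$-vertex into the single new part $Z$ (which is then the union of the at most $2g$ vertical paths in $T$, as required), and leave the remaining parts untouched, noting that every edge of $G^{(d)}$ either is an edge of $(G^+)^{(d)}$ or becomes one after identifying $Z$-copies (this needs the observation that a chord added across a face of $G$ of length $\le d$ either survives in $G^+$ or has both endpoints identified into $Z$). The quotient $H$ can only get edges removed or identified by this contraction, and since a minor of a planar graph of treewidth $\le 3$ is still planar of treewidth $\le 3$, the resulting $H$ has the required properties. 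The main obstacle I anticipate is the bookkeeping around the cutting operation: ensuring that the at most $g$ cutting curves can simultaneously be taken to be fundamental cycles of $T$ (so that they are vertical paths plus one edge), that cutting along all of them at once genuinely produces a disk rather than a surface of smaller genus with several boundary components, that face lengths are preserved, and that $T$ survives as a spanning tree of the planar cut-open graph with the root on the outer face — all of which are standard facts about surface-embedded graphs but require a careful, explicit setup to feed cleanly into \cref{NewDMWv2}.
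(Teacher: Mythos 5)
Your overall strategy coincides with the paper's: choose a tree--cotree system of $g$ non-tree edges whose fundamental-cycle union $Z$ (at most $2g$ vertical paths through the root) cuts the surface open to a planar multigraph, apply \cref{NewDMWv2} to the cut-open graph with the boundary cycle of the new face as the special vertical path, and then re-identify the split copies so that the special part becomes $Z$. The cutting construction, the Euler-formula verification that exactly one new face is created and that its boundary is a cycle through all split copies, and the transfer back are all as in the paper (and are inherited from Lemma~21 of Dujmovi\'c et al.). Your closing observation that identifying copies only takes a minor of $H$ is not even needed in the paper's version, because all split copies lie in the single special part, so the quotient $H$ is unchanged.

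However, the two points you flag as ``requiring care'' are precisely where the new content of this lemma lies, and as written your sketch does not resolve them. First, $T$ does \emph{not} lift to a spanning tree of the cut-open graph: the subtree $T[V(Z)]$ is duplicated along the boundary cycle $C_F$, and what survives is a spanning forest with many components. Second, if you root the new tree at a copy of $r$ lying \emph{on} $C_F$, you cannot satisfy the hypothesis of \cref{NewDMWv2} that the cycle $C=P+p_1p_2$ is contained in $G^+-r$, nor can the special part $P$ then contain \emph{all} of the split copies (the root copy is excluded), which breaks the clean replacement of $P$ by $Z$. The paper resolves both issues with one device: it adds a new apex vertex $r^+$ inside the new face joined to every split copy, takes a Hamiltonian path $P^+$ of $C_F$ plus the edge $r^+v^+$ as the top of a freshly built spanning tree $T^+$, and reattaches each component of $T-V(Z)$ by a single edge; it then checks that every vertical path of $T^+$ avoiding $V(C_F)\cup\{r^+\}$ is a vertical path of the original $T$, which is what makes the conclusion about the sets $Y_i$ refer to $T$ rather than to $T^+$. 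Without this (or an equivalent) construction, the application of \cref{NewDMWv2} and the final bookkeeping do not go through, so you should regard this as a genuine missing step rather than routine bookkeeping.
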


\begin{proof}
	We start by following the proof of \citep[Lemma~21]{DJMMUW20}, which is the heart of the proof of \cref{SurfaceStructure}. Near the end of our proof  we follow a different strategy to obtain the stronger result.
	
	If $g=0$, then the claim follows from \cref{NewDMWv2} 
	by considering an appropriate supergraph $G^{+}$ of $G$. Now assume that $g\geq 1$. Say $G$ has $n$ vertices, $m$ edges, and $f$ faces. By Euler's formula, $n-m+f=2-g$. Let $D$ be the multigraph with vertex-set the set of faces in $G$, where for each edge $e$ of $E(G)\setminus E(T)$, if $f_1$ and $f_2$ are the faces of $G$ with $e$ on their boundary, then there is an edge joining $f_1$ and $f_2$ in $D$. (Think of $D$ as the spanning subgraph of the dual graph consisting of those edges that do not cross edges in $T$.)  Note that $|V(D)|=f=2-g - n + m$ and $|E(D)|=m-(n-1)= |V(D)|-1+g$. Since $T$ is a tree, $D$ is connected; see~\citep[Lemma~11]{DMW17} for a proof. Let $T^*$ be a spanning tree of $D$. Thus $|E(D)\setminus E(T^*)|=g$.  Let $Q=\{a_1b_1,a_2b_2,\dots,a_gb_g\}$ be the set of edges in $G$ dual to the edges in $E(D)\setminus E(T^*)$. Let $r$ be the root of $T$, and for $i\in\{1,2,\dots,g\}$, let $Z_i$ be the union of the $a_ir$-path and the $b_ir$-path in $T$, plus the edge $a_ib_i$. Let $Z := Z_1\cup Z_2\cup\dots\cup Z_g$. By construction, $Z$ is a connected subgraph of $G$; see \cref{ToroidalGridExampleFramed} for an example. In fact, since $r$ is contained in each of the $2g$ vertical paths, $T[V(Z)]$ is connected. Say $Z$ has $p$ vertices and $q$ edges. Since $Z$ consists of a subtree of $T$ plus the $g$ edges in $Q$, we have $q = p-1+g$.
	
	\begin{figure}[!h]
		\centering
		\includegraphics{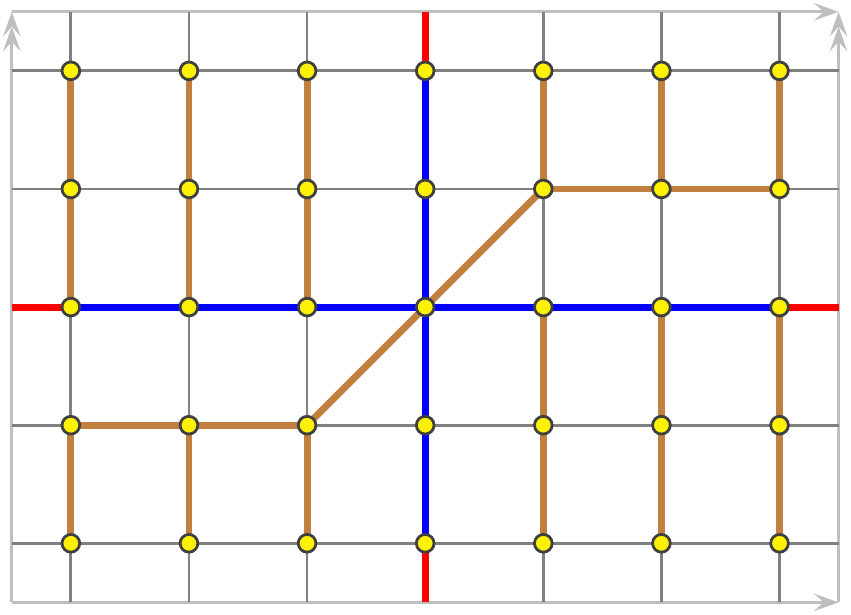}
		\caption{Example of the construction in the proof of \cref{GenusPartitionNewFrame}, where brown edges are in $T$, red edges are in $Q$, and blue edges are in $T$ and in $Z-E(Q)$.}
		\label{ToroidalGridExampleFramed}
	\end{figure}
	
	We now describe how to `cut' along the edges of $Z$ to obtain a new embedded graph $\tilde{G}$; see \cref{CuttingFramed}. First, each edge $e$ of $Z$ is replaced by two edges $e'$ and $e''$ in $\tilde{G}$. Each vertex of $G$ that is not contained in $V(Z)$ is untouched. Consider a vertex $v\in V(Z)$ incident with edges $e_1,e_2,\dots,e_d$ in $Z$ in clockwise order. In $\tilde{G}$ replace $v$ by new vertices $v_1,v_2,\dots,v_d$, where $v_i$ is incident with $e'_i$, $e''_{i+1}$ and all the edges incident with $v$ clockwise from $e_i$ to $e_{i+1}$ (exclusive). Here $e_{d+1}$ means $e_1$ and $e''_{d+1}$ means $e''_1$. This operation defines a cyclic ordering of the edges in $\tilde{G}$ incident with each vertex (where $e''_{i+1}$ is followed by $e'_i$ in the cyclic order at $v_i$). This in turn defines an embedding of $\tilde{G}$ in some orientable surface\footnote{If $G$ is embedded in a non-orientable surface, then the edge signatures for $G$ are ignored in the embedding of $\tilde{G}$.}. Let $Z'$ be the set of vertices introduced in $\tilde{G}$ by cutting through vertices in $Z$. 
	
	\begin{figure}[!t]
		\centering
		\includegraphics{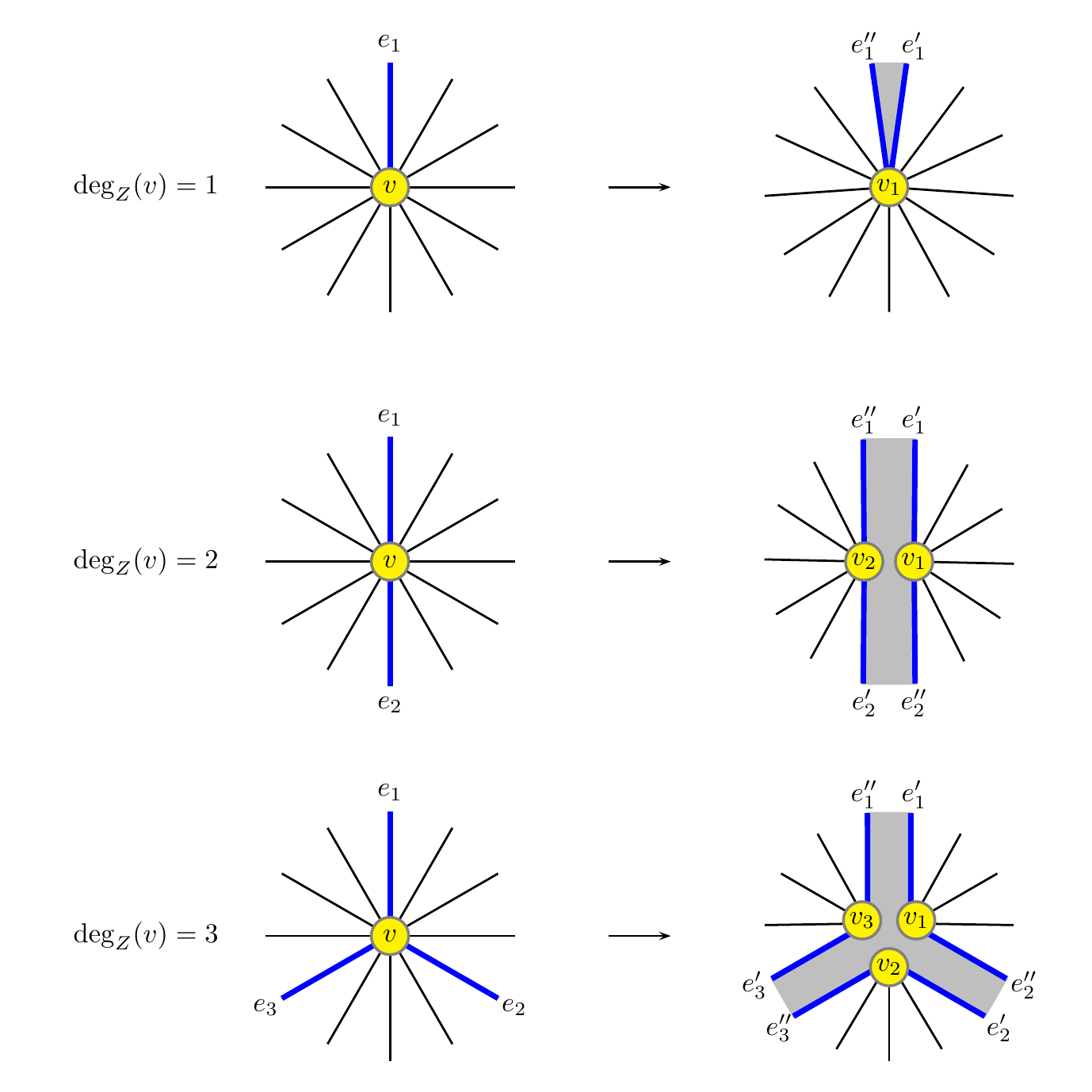}
		\caption{Cutting the blue edges in $Z$ at each vertex.\label{CuttingFramed}}
	\end{figure}
	
	We now show that $\tilde{G}$ is connected. Consider vertices $x_1$ and $x_2$ of $\tilde{G}$. Select faces $f_1$ and $f_2$ of $\tilde{G}$ respectively incident to $x_1$ and $x_2$ that are also faces of $G$. Let $P$ be a path joining $f_1$ and $f_2$ in the dual tree $T^*$. Then the edges of $G$ dual to the edges in $P$ were not split in the construction of $\tilde{G}$. Therefore an $x_1x_2$-walk in $\tilde{G}$ can be obtained by following the boundaries of the faces corresponding to vertices in $P$. Hence $\tilde{G}$ is connected. 
	
	Say $\tilde{G}$ has $n'$ vertices and $m'$ edges, and the embedding of $\tilde{G}$ has $f'$ faces and Euler genus $g'$. Each vertex with degree $d$ in $Z$ is replaced by $d$ vertices in $\tilde{G}$. Each edge in $Z$ is replaced by two edges in $\tilde{G}$, while each edge of $E(G)-E(Z)$ is maintained in $\tilde{G}$. Thus
	$$n' = n -p + \sum_{v\in V(Z)}\deg_Z(v) = n + 2q -p  = n + 2(p-1+g) - p = n +p - 2 + 2g$$ and $m' = m + q = m + p-1 + g$. Each face of $G$ is preserved in $\tilde{G}$. Say $s$ new faces are created by the cutting. Thus $f'=f+s$. Since $\tilde{G}$ is connected, $n'-m'+f'=2-g'$ by Euler's formula. Thus $(n +p - 2 + 2g) - (m + p-1 + g) + (f+s) = 2-g'$, implying $(n-m+f)   - 1 + g + s = 2-g'$. Hence $(2-g)   - 1 + g + s = 2-g'$, implying $g' = 1-s$. Since $g'\geq 0$, we have $s\leq 1$.
	Since $g\geq 1$, by construction, $s\geq 1$. Thus $s=1$ and $g'=0$. Hence  $\tilde{G}$ is plane and all the vertices in $Z'$ are on the boundary of a single face, $F$, of $\tilde{G}$. Moreover, the boundary of $F$ is a cycle $C_F$ and $V(C_F)=Z'$. Consider $F$ to be the outer-face of $\tilde{G}$. 
	
	Now construct a supergraph $G^{+}$ of $\tilde{G}$ by adding a vertex $r^{+}$ in $F$ and edges from $r^{+}$ to each vertex in $Z'$. Then $G^+$ is a plane multigraph where each face of $G^+$ is bounded by a cycle. 
	
	We now depart from the proof of \citet[Lemma~21]{DJMMUW20}. Let $P^{+}$ be an arbitrary path such that $V(P^{+})=V(C_F)$ and let $v^{+}\in V(P^{+})$ be an end-vertex of $P^{+}$. Let $T^{+}$ be the following spanning tree of $G^{+}$ rooted at $r^{+}$. Initialise $T^{+}$ to be the path $P^{+}$ plus the edge $r^{+}v^{+}$. Let $E':=\{vw\in E(T) : v\in Z, w\in V(G)\setminus V(Z)\}$ and $h:=|E'|$. Observe that $T-V(Z)$ is a forest with $h$ components. For each edge $vw\in E'$, $w$ is adjacent to exactly one vertex $v_i\in V(Z')$ introduced when cutting $v$. Add the edge $v_iw$ to $T^{+}$. Finally, add the induced forest $T-V(Z)$ to $T^{+}$; see \cref{SpanningTreePlusFramed}. Then $T^{+}$ is connected since each component of $T-V(Z)$ is adjacent in $T^{+}$ to some vertex in $V(P^+)$. Furthermore, since $|V(T^+)|=|V(P^+)|+|V(G)\setminus V(Z)|$ and $|E(T^+)|=|E(P^+)|+h+(|V(G)\setminus V(Z)|-h)=|V(P^+)|+|V(G)\setminus V(Z)|-1$, it follows that $T^+$ is indeed a spanning tree of $G^+$. Consider each component of $T-V(Z)$ to be a subtree of $T^+$.
	
	\begin{figure}[!h]
		\centering
		\includegraphics[width=0.55\textwidth]{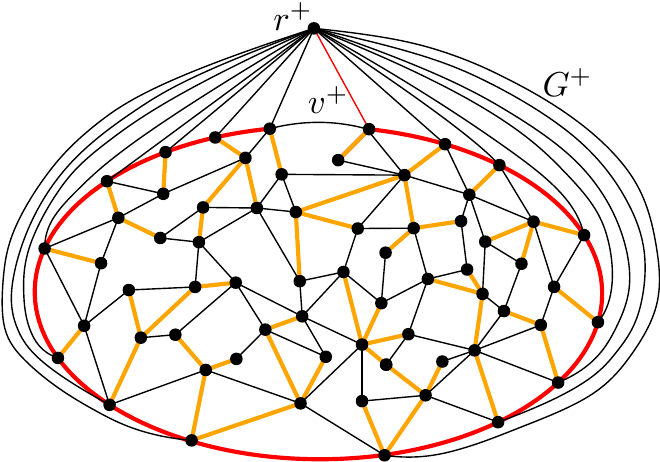}
		\caption{Example of the spanning tree $T^+$ in the graph $G^+$, where the edges in $E(P^{+})\cup \{r^{+}v^{+}\}$ are red and the edges that are either in $E(T-V(Z))$ or of the form $v_iw$ are orange.} \label{SpanningTreePlusFramed}
	\end{figure}
	
	Now every vertical path in $T^{+}$ contained in $V(G)\bs V(Z)$ corresponds to a vertical path in $T$. Every maximal vertical path in $T^{+}$ consists of the edge $r^{+}v^{+}$, a subpath of $P^{+}$, some edge $v_iw$ (where $w\in V(G)\bs V(Z)$), followed by a path in $T-V(Z)$ from $w$ to a leaf in $T$. Since every vertical path $P$ in $T^{+}$ is contained in some maximal vertical path in $T^{+}$, it follows that $P \cap (V (G) \bs V (Z))$ is a vertical path in $T$. Thus every vertical path in $T^+$ that is contained in $V(G)\bs V(Z)$ is a vertical path in $T$.
	
	Triangulate every face in $G^{+}$ whose facial cycle has length greater than $d$. Since $r^+$ is on the boundary of the outer-face of $G^{+}$, $V(P^+)=V(C_F)$, every facial cycle has length in $\{3,\dots,d\}$ and $P^+$ is a vertical path of $T^+$, \cref{NewDMWv2} is applicable. Let $\mathcal{P'}$ be the $H$-partition of $\tilde{G}^{(d)}$ given by \cref{NewDMWv2}. Therefore, $H$ is planar with treewidth at most $3$, where $P^+\in\mathcal{P'}$ and each part in $S_i \in \mathcal{P'}\setminus\{P^+\}$ has a partition $\{X_i,Y_i\}$ where $|X_i|\leq d-3$ and $Y_i$ is the union of at most three vertical paths in $T'$. Let $\mathcal{P}$ be the partition of $G^{(d)}$ obtained by replacing $P^+$ by $Z$. Since $V(P^+)=V(Z')$ and all the split vertices of $G$ are in $Z$, we have $G^{(d)}/\mathcal{P}\cong \tilde{G}^{(d)}/\mathcal{P'} \cong H$. Hence $\mathcal{P}$ is also an $H$-partition where $H$ is planar with treewidth at most $3$. In addition, since each vertical path in $T^{+}$ that is disjoint from $V(Z') \cup \{r^+\}$ is a vertical path in $T$, each part $S_i \in \Pcal \bs \{Z\}$ has a partition $\{X_i,Y_i\}$ where $|X_i|\leq d-3$ and $Y_i$ is the union of at most three vertical paths in $T$, as required.
\end{proof}

\cref{FramedProduct} is an immediate consequence of \cref{VerticalPathsLemma} and the next lemma. 

\begin{lem}\label{FramedPartition}
	Let $G$ be a multigraph embedded in a surface of Euler genus $g$ without crossings, where each face is bounded by a cycle. Then $G^{(d)}$ has a layered $H$-partition $(\Pcal,\Lcal)$ with width at most $\max\{2g\floor{\frac{d}{2}},d+3\floor{\frac{d}{2}}-3\}$, such that $H$ is planar with treewidth at most 3.
\end{lem}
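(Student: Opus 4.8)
The plan is to deduce \cref{FramedPartition} from \cref{GenusPartitionNewFrame} by choosing a suitable spanning tree $T$ of $G$ and then converting the control we obtained over the parts (``union of at most $2g$ vertical paths'' for $Z$, and ``$|X_i|\le d-3$ plus at most three vertical paths $Y_i$'' for the remaining parts) into a layering of bounded width. First I would handle connectivity: it suffices to prove the result when $G$ is connected, since a disconnected $G$ can be treated component-by-component (taking a disjoint union of the layered $H$-partitions, with the layerings aligned at $L_0$, keeps the width the same). So assume $G$ is connected and fix a BFS spanning tree $T$ of $G$ rooted at an arbitrary vertex $r$; this is the standard choice, because in a BFS tree the distance-from-$r$ function gives a layering $\Lcal=(L_0,L_1,\dots)$ of $G$ itself (adjacent vertices are in consecutive layers), and, crucially, any vertical path of $T$ meets each layer $L_i$ in at most one vertex.

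Next I would apply \cref{GenusPartitionNewFrame} with this $T$ and the given $d$ to obtain the $H$-partition $\Pcal$ of $G^{(d)}$, with $H$ planar of treewidth at most $3$. The one subtlety is that $\Lcal$ is a layering of $G$, not of $G^{(d)}$: adding the ``framing'' edges $vw$ across a face bounded by a cycle $C$ of length at most $d$ can join vertices whose layer indices differ by more than one. To fix this I would coarsen $\Lcal$ by grouping consecutive layers into blocks of $\floor{\frac{d}{2}}$ — that is, define $\Lcal' = (L'_0, L'_1, \dots)$ where $L'_j = L_{j\floor{d/2}} \cup \dots \cup L_{(j+1)\floor{d/2}-1}$. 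Since a cycle of length at most $d$ spans at most $\floor{\frac d2}+1$ consecutive layers of $G$, a short computation shows that along the cycle the layer index varies by at most $\floor{\frac d2}$, so any framing edge $vw$ has its endpoints within $\floor{\frac d2}$ layers of each other; hence in $\Lcal'$ they lie in the same block or in two consecutive blocks. Thus $\Lcal'$ is a genuine layering of $G^{(d)}$, and moreover every vertical path of $T$ meets each block $L'_j$ in at most $\floor{\frac d2}$ vertices.

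Finally I would bound the width of $(\Pcal,\Lcal')$, i.e.\ $\max_{A\in\Pcal,\, j}\, |A \cap L'_j|$. For the special part $Z$: it is the union of at most $2g$ vertical paths of $T$, each meeting $L'_j$ in at most $\floor{\frac d2}$ vertices, so $|Z\cap L'_j| \le 2g\floor{\frac d2}$. For any other part $S_i = X_i \cup Y_i$: we have $|X_i| \le d-3$, and $Y_i$ is a union of at most three vertical paths of $T$, so $|Y_i \cap L'_j| \le 3\floor{\frac d2}$, giving $|S_i \cap L'_j| \le d-3+3\floor{\frac d2}$. Taking the maximum gives width at most $\max\{2g\floor{\frac d2},\, d+3\floor{\frac d2}-3\}$, as claimed. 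I expect the main obstacle to be the layering step: one must verify carefully that the coarsened layering $\Lcal'$ is valid for $G^{(d)}$ — which forces the block size to be exactly $\floor{\frac d2}$ — while simultaneously keeping the per-path contribution to each block down to $\floor{\frac d2}$, since it is precisely this balance that makes the two terms in the final bound come out right (and explains why the bound is $d+3\floor{\frac d2}-3$ rather than something like $3\floor{\frac d2}$ or $d$).
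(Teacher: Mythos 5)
Your proposal is correct and follows essentially the same route as the paper: apply \cref{GenusPartitionNewFrame} with a BFS spanning tree, note that any edge of $G^{(d)}$ joins vertices at distance at most $\floor{\frac{d}{2}}$ in $G$, coarsen the BFS layering into blocks of $\floor{\frac{d}{2}}$ consecutive layers, and bound each part's intersection with a block exactly as you do. (The only cosmetic difference is that the paper dispenses with the disconnected case by observing that the hypothesis that every face is bounded by a cycle already forces $G$ to be connected.)
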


\begin{proof}
	Since each face of $G$ is bounded by a cycle, $G$ is connected. Let $T$ be a BFS-spanning tree of $G$ with corresponding BFS-layering\footnote{If $G$ is a connected graph and $T$ is a spanning tree of $G$ rooted at vertex $r$, then $T$ is \defn{BFS} if  $\dist_T(v,r)=\dist_G(v,r)$ for every $v\in V(G)$. A  layering $(L_0,L_1,\dots)$ of a graph $G$ is \defn{BFS} if $L_0 = \{r\}$ for some root vertex $r\in V(G)$ and $L_i=\{v\in V(G):\dist_G(v,r)=i\}$ for all $i\geq 1$.} $(V_0,V_1,\dots)$. By \cref{GenusPartitionNewFrame}, $G^{(d)}$ has an $H$-partition $\Pcal$ such that one part $Z\in \Pcal$ is the union of at most $2g$ vertical paths in $T$ and each part $S_i\in \Pcal\bs \{Z\}$ has a partition $\{X_i,Y_i\}$ where $|X_i|\leq d-3$ and $Y_i$ is the union of at most three vertical paths in $T$, and $H$ is planar with treewidth at most $3$. It remains to adjust the layering of $G$ to obtain a layering of $G^{(d)}$. If $uv\in E(G^{(d)})$ then $\dist_G(u,v)\leq \floor{\frac{d}{2}}$. Thus if $u\in V_i$ and $v\in V_j$ then $|i-j|\leq \floor{\frac{d}{2}}$. For each $j\in \mathbb{N},$ let $L_j=V_{j\floor{\frac{d}{2}}}\cup \dots \cup V_{(j+1)\floor{\frac{d}{2}}-1}$. Then $(\Pcal,\Lcal=(L_0,L_1,\dots))$ is a layered $H$-partition of $G^{(d)}$ with width at most $\max\{2g\floor{\frac{d}{2}},d+3\floor{\frac{d}{2}}-3\}$, as required.
\end{proof}

We conclude by showing that $(\Sigma,d)$-map graphs and $(\Sigma,1)$-planar graphs are contained in framed graphs. \citet{DMW} proved the following result in the case of plane map graphs (and similar results were previously known in the literature~\citep{CGP06,Brandenburg19,Brandenburg20}). An analogous proof works for arbitrary surfaces, which we include for completeness. Together with \cref{FramedProduct}, this implies \cref{MapGraphs}.

\begin{lem}
	\label{MapGraphFramed}
	For every surface $\Sigma$ and integer $d\geq 3$, every $(\Sigma,d)$-map graph is a subgraph of $G^{(d)}$ for some multigraph $G$ embedded in $\Sigma$ without crossings, where each face of $G$ is bounded by a cycle. 
\end{lem}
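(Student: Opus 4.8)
The plan is to start from the embedded graph $G_0$ that witnesses that $M$ is a $(\Sigma,d)$-map graph, and to construct from it a new embedded multigraph $G$ whose $(\Sigma,d)$-framing $G^{(d)}$ contains $M$ as a subgraph. The vertices of $G$ will be the nations of $G_0$ (i.e.\ the vertices of $M$), placed one in the interior of each nation face, and the faces of $G$ will correspond to the vertices of $G_0$: around each vertex $u$ of $G_0$ the nations incident with $u$ appear in some cyclic order $n_1,\dots,n_k$ (with $k\le d$ since $u$ is incident with at most $d$ nations), and we add to $G$ a closed walk through the representative vertices of $n_1,\dots,n_k$ that bounds a face of $G$ placed in a small disc around $u$. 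Concretely, for each vertex $u$ of $G_0$ and each pair $n_i,n_{i+1}$ of nations consecutive in the cyclic order around $u$ that are \emph{not already joined} by an edge of $G$ arising from another vertex, add an edge between their representatives drawn through (a neighbourhood of) $u$; lakes incident with $u$ are simply skipped in the cyclic order, so consecutive-around-$u$ means consecutive among nations. Doing this for every vertex $u$ produces a multigraph $G$ drawn in $\Sigma$ without crossings.

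The key observations to check are the following. First, $G$ is drawn without crossings: all edges added at $u$ live in a small disc around $u$, distinct discs are disjoint, and within one disc the edges can be routed as chords of the cyclic arrangement of the incident nations without crossing. Second, $M$ is a subgraph of $G^{(d)}$: if $n$ and $n'$ are adjacent in $M$ then the corresponding nation faces share a vertex $u$ of $G_0$; in the cyclic order of nations around $u$, $n$ and $n'$ either are consecutive — in which case $G$ itself contains an edge between their representatives — or they both lie on the facial walk of length at most $d$ that $G$ has around $u$, so $G^{(d)}$ adds the edge between them (if it is not already present). In either case the edge $nn'$ of $M$ is an edge of $G^{(d)}$, and since $V(M)=V(G)=V(G^{(d)})$, this shows $M\subseteq G^{(d)}$. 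Third, each face of $G$ is bounded by a cycle: the faces of $G$ are exactly the discs around vertices of $G_0$, each bounded by the closed walk through the nations incident with that vertex; we must ensure this walk is a genuine cycle, which requires that around any single vertex $u$ no nation appears twice and no two consecutive nations around $u$ are equal — this is where a little care (and possibly a minor preprocessing of $G_0$, e.g.\ removing redundant incidences or subdividing) is needed, exactly as in the plane case treated by \citet{DMW}.

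I expect the main obstacle to be precisely this last point: guaranteeing that every face of the constructed $G$ is bounded by a \emph{cycle} rather than a closed walk that repeats a vertex, and simultaneously that $G$ has no faces of the wrong kind (in particular that the faces coming from lakes, and from vertices incident with fewer than two nations, do not spoil the structure). The fix should be the same as in the planar argument: normalise the map so that each nation is a disc, each vertex lies on the boundary of at least two distinct nations (contracting or deleting away degenerate situations, which does not affect $M$), and then the facial walk around each vertex is a cycle by construction. The surface plays no role in this local reasoning — all of it happens in small discs around vertices and inside individual nation faces — so the plane proof of \citet{DMW} transfers essentially verbatim, which is why the lemma holds for every surface $\Sigma$.
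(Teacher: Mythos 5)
There is a genuine gap, and it is not the one you flagged. You assert that ``the faces of $G$ are exactly the discs around vertices of $G_0$,'' but this is false: your $G$ consists only of the nation-cycle edges drawn in small discs around the vertices of $G_0$, so the complement of those discs --- the interiors of all the lakes, the interiors of the nations away from their representative vertices, and neighbourhoods of the edges of $G_0$ --- is also covered by faces of $G$, and nothing in your construction controls their boundaries. These residual faces can easily have boundary walks with repeated vertices (a nation touching the boundary of a lake at several vertices of $G_0$ reappears several times on the boundary of the face of $G$ containing that lake), or even disconnected boundaries on a surface of positive genus. Since the framing operation $G^{(d)}$ and \cref{FramedProduct} are only defined/applicable when \emph{every} face of $G$ is bounded by a cycle, this is exactly the hypothesis your construction fails to deliver, and no amount of local preprocessing around individual vertices of $G_0$ repairs it.

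The paper avoids this by not deleting the rest of the structure: after making $G_0$ edge-maximal (which forces every face of $G_0$ to be bounded by a cycle, and in particular rules out a nation appearing twice around a vertex --- the issue you did correctly identify but left to ``minor preprocessing''), it takes $G$ to be the full dual multigraph $G_0^*$, lake vertices included, \emph{plus} your nation-cycle chords inserted inside the faces of $G_0^*$. Because every face of $G_0^*$ is bounded by a cycle and the chords only subdivide such faces, every face of the resulting $G$ is again bounded by a cycle, and the nation cycles become facial cycles of length at most $d$, so the same final argument ($v,w\in C_x$ implies $vw\in E(G^{(d)})$) goes through. Your $G$ is in effect the paper's $G$ with the dual edges and lake vertices erased; retaining them is the missing idea. (Note also that the lemma only asks for $M$ to be a subgraph, not a spanning subgraph, so the extra lake vertices in $G$ are harmless.)
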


\begin{proof}
	Let $G_0$ be a graph embedded in $\Sigma$, with each face labelled a nation or a lake, and where each vertex of $G_0$ is incident with at most $d$ nations. Let $M$ be the corresponding map graph. 
	
	If $G_0$ has a face $F$ of length 2, then add a new vertex inside $F$ adjacent to both vertices on the boundary of $F$, which creates two new triangular faces $F_1$ and $F_2$. If $F$ is a lake, then make $F_1$ and $F_2$ lakes. If $F$ is a nation, then make $F_1$ a nation and make $F_2$ a lake. The resulting map graph is still $M$. So we may assume that $G_0$ is an edge-maximal multigraph embedded in $\Sigma$ with no face of length 2 (and with each face labelled a nation or a lake), such that $M$ is the corresponding map graph. This is well-defined since the assumption of having no face of length 2 implies that $|E(G_0)|\leq 3(|V(G)|+g-2)$, where $g$ is the Euler genus of $\Sigma$. 
	
	Suppose that some face $f$ of $G_0$ has a disconnected boundary. Let $v$ and $w$ be vertices in distinct components of the boundary of $f$. Add the edge $vw$ to $G_0$ across $f$. The corresponding map graph is unchanged, which contradicts the edge-maximality of $G_0$. Thus each face of $G_0$ has a connected boundary. Suppose that some face $f$ of $G_0$ has a repeated vertex $v$ in the boundary walk of $f$. Let $u,v,w$ be consecutive vertices on the boundary of $f$. So $u,v,w$ are distinct. Add the edge $uw$ inside $f$ so that $uvw$ bounds a disk. Label the resulting face $uvw$ as a lake. Since $v$ appears elsewhere in the boundary of $f$, the corresponding map graph is unchanged, which contradicts the edge-maximality of $G_0$. Thus no facial walk of $G_0$ has a repeated vertex. Since each facial walk is connected, every face of $G_0$ is bounded by a cycle.
	
	Let $G_0^*$ be the dual multigraph of $G_0$. So the vertices of $G_0^*$ correspond to faces of $G_0$, and each vertex of $G_0^*$ is a nation vertex or a lake vertex. Since every face of $G_0$ is bounded by a cycle, every face of $G^*_0$ is bounded by a cycle. 
	
	Let $x$ be a vertex of $G_0$, let $F_x$ be the corresponding face of $G_0^*$, and let $(v_1,\ldots,v_s)$ be the facial cycle of $F_x$.  Let $C_x:=(w_1,\ldots,w_r)$ be the circular subsequence of $(v_1,\ldots,v_s)$ consisting of only the nation vertices. Since $x$ is incident to at most $d$ nations, $r\le d$. 
	
	Let $G$ be the supergraph of $G_0^*$ obtained by adding an edge between each pair of consecutive vertices in $C_x=(w_1,\ldots,w_r)$ for each vertex $x$ of $G_0$. 
	The graph consisting of $C_x$ plus these added edges is called the \defn{nation cycle} (of $x$). Note that if $r=1$ then the nation cycle has no edges, and if $r=2$ then the nation cycle has one edge. Since every face of $G_0^*$ is bounded by a cycle, every face of $G$ is bounded by a cycle. Moreover, each nation cycle of length at least $3$ is now a facial cycle of $G$ with length at most $d$. By construction, $G$ embeds in $\Sigma$ with no crossings. Let $G^{(d)}$ be the $d$-framed graph whose frame is $G$.
	
	By definition, $V(M) \subseteq V(G^{(d)})$. To prove the claim, it suffices to show that $E(M)\subseteq E(G^{(d)})$.  Indeed, if $vw\in E(M)$ then the nation faces corresponding to $v$ and $w$ have a common vertex $x$ on their boundary. The vertex $x$ corresponds to a face $F_x$ in $G_0^*$ and the facial cycle of $F_x$ contains $v$ and $w$.  Therefore, the nation cycle $C_x$ of $F_x$ contains $v$ and $w$. If $C_x$ has length $2$ then $vw\in E(G)\subseteq E(G^{(d)})$. If $C_x$ has length at least $3$ then it has length at most $d$ and it bounds a face in $G$. So $vw\in E(G^{(d)})$.
\end{proof}

\citet{DMW} proved the following result in the case of 1-planar graphs (and similar results were previously known in the literature~\citep{CGP06,BDGGMR20,Brandenburg19,Brandenburg20}). An analogous proof works for arbitrary surfaces, which we include for completeness. Together with \cref{FramedProduct}, this implies \cref{1PlanarGraphs}.

\begin{lem}
	\label{1PlanarFramed}
	Every $(\Sigma,1)$-planar graph $G$ with at least three vertices is contained in $G_0^{(4)}$ for some multigraph $G_0$ embedded in $\Sigma$ with no crossings where each face of $G_0$ is bounded by a cycle.
\end{lem}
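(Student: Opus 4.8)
The plan is to mimic the structure of the proof of Lemma~\ref{MapGraphFramed}: start with a drawing of $G$ in $\Sigma$ with at most one crossing per edge, ``planarise'' it by making each crossing a vertex, and then build a frame $G_0$ whose $d$-framing (with $d=4$) contains $G$. First I would take an embedding of $G$ in $\Sigma$ witnessing $(\Sigma,1)$-planarity, and consider the \emph{planarisation} $G'$ obtained by replacing each crossing point with a new (degree-$4$) \emph{dummy} vertex. Then $G'$ is embedded in $\Sigma$ without crossings. The key observation is that if $e=uv$ and $f=xy$ cross at a dummy vertex $z$, then $u,x,v,y$ appear in this cyclic order around $z$, so $\{u,x,v,y\}$ lies on a face-like $4$-cycle once we clean things up; the two ``half-edges'' of $e$ become $uz$ and $zv$, and similarly for $f$. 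The goal is to arrange that each such $4$-cycle $uxvy$ bounds a face of length $4$ in $G_0$, so that the chords $uv$ and $xy$ are added back when we pass to $G_0^{(4)}$.

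The main technical work is to massage $G'$ into a multigraph $G_0$ in which every face is bounded by a cycle of length at most $4$ wherever we need a recovered edge, and in which no face has length $2$ (so that ``bounded by a cycle'' is genuinely achievable). Concretely I would: (i) delete the dummy vertices and reinsert, for each crossing $z$ coming from edges $uv$, $xy$, a $4$-cycle $(u,x,v,y)$ bounding a small disk in $\Sigma$ at the site of the crossing, with the two real edges $uv$ and $xy$ removed from $G$; (ii) for uncrossed edges of $G$, keep them; (iii) handle faces of length $1$ or $2$ exactly as in Lemma~\ref{MapGraphFramed} (subdividing or adding a vertex inside a $2$-face) and handle faces with disconnected boundary or repeated boundary vertices by adding chords, so that after taking an edge-maximal such multigraph, every face of $G_0$ is bounded by a cycle. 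The edge-maximality argument (using $|E(G_0)|\le 3(|V(G_0)|+g-2)$) guarantees the process terminates, and ``at least three vertices'' is used exactly to rule out the degenerate small cases. Then $G_0$ is a multigraph embedded in $\Sigma$ without crossings with each face bounded by a cycle.

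It remains to check $V(G)\subseteq V(G_0^{(4)})$ and $E(G)\subseteq E(G_0^{(4)})$. Every original vertex of $G$ survives in $G_0$, so $V(G)\subseteq V(G_0)\subseteq V(G_0^{(4)})$. For an edge $uv\in E(G)$: if $uv$ is uncrossed it lies in $E(G_0)\subseteq E(G_0^{(4)})$; if $uv$ is crossed, then by construction $u$ and $v$ are opposite vertices of a $4$-cycle $(u,x,v,y)$ that bounds a face of $G_0$ of length $4$, so $uv$ is added as a chord of this face in $G_0^{(4)}$. (One must be slightly careful when both edges $uv$ and $xy$ of a crossing already existed elsewhere as non-crossing edges, creating a multigraph — this is exactly why $G_0$ is allowed to be a multigraph.) Hence $G\subseteq G_0^{(4)}$.

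The step I expect to be the main obstacle is (ii)–(iii): ensuring that \emph{every} face of the final multigraph is bounded by a cycle while \emph{simultaneously} keeping each face that must carry a recovered edge of length at most $4$, and verifying that adding the cleanup chords never destroys a $4$-cycle we rely on. This requires care because inserting a chord into a $\ge 5$-face to break up a repeated boundary vertex could, in principle, cut through the region where a crossing-$4$-cycle sits; the fix is to perform the crossing-disk replacements first (these regions are then genuine bounded-by-a-$4$-cycle faces and are never touched again) and only afterwards run the Lemma~\ref{MapGraphFramed}-style cleanup on the remaining faces. With that ordering the argument goes through verbatim along the lines of Lemma~\ref{MapGraphFramed}.
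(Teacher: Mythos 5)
Your construction is essentially the one in the paper: place a ``kite'' $4$-cycle $(u,x,v,y)$ bounding a face at each crossing of $uv$ with $xy$, delete the two crossing edges, and triangulate everything else so that every remaining face is bounded by a $3$-cycle. The paper just reverses the order of operations: it first extends $G$ to an edge-maximal $1$-plane multigraph $G'$ (with no face bounded by two parallel edges), which automatically forces the kite cycle to be present and every non-kite face to be a triangle, and only then deletes the crossing pairs. Your worry about cleanup chords invading a kite face, and your fix (insert the kites first, then clean up the remaining faces), are both fine; the edge-maximality count you cite makes the process well-defined.

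There is, however, one genuine gap at the very start: you implicitly assume that the four endpoints $u,x,v,y$ of a crossing pair are distinct. If two edges sharing an endpoint cross, then $(u,x,v,y)$ degenerates into a closed walk with a repeated vertex; it is not a cycle, does not bound a disk face in the required sense, and the chord you need is an edge incident to the repeated vertex rather than a diagonal of a quadrilateral face, so the recovery of $E(G)$ inside $G_0^{(4)}$ fails for that crossing. The paper disposes of this in its first sentence: one may assume that no two edges of $G$ incident to a common vertex cross, since such a crossing can be removed by a local rerouting (swap the two edge segments beyond the crossing point). You need this normalization of the embedding before planarising. A secondary point you should make explicit is why the kite edges $ux,xv,vy,yu$ can be drawn without creating new crossings: route each inside one of the four wedges at the crossing, staying close to the two half-edges, which carry no further crossings because each edge is crossed at most once. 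The paper obtains this for free from the edge-maximality of $G'$, whereas in your ordering it must be argued directly.
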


\begin{proof}
	We may assume that $G$ is embedded in $\Sigma$ with at most one crossing on each edge, such that no two edges of $G$ incident to a common vertex cross, since such a crossing can be removed by a local modification to obtain an embedding of $G$ in which the two edges do not cross.
	
	Initialise $G':=G$. Add edges to $G'$ to obtain an  edge-maximal multigraph embedded in $\Sigma$ such that each edge is in at most one crossing, no two edges incident to a common vertex cross, and no face is bounded by two parallel edges. The final condition ensures that $G'$ is well-defined, since it follows from Euler's formula that if $G$ has $k$ crossings, then $|E(G')| \leq 3( |V(G)| +k+g-2) -2k$.
	
	Consider crossing edges $e_1=vw$ and $e_2=xy$ in $G'$. So $v,w,x,y$ are distinct. Since $e_1$ is the only edge that crosses $e_2$ and $e_2$ is the only edge that crosses $e_1$, by the edge-maximality of $G'$, there is a cycle $C=(v,x,w,y)$ in $G'$ that bounds a disc whose interior intersects no edge of $G'$ except $e_1$ and $e_2$. 
	
	Let $G_0$ be the embedded multigraph obtained from $G'$ by deleting each pair of crossing edges. Thus the above-defined cycle $C$ bounds a face of $G_0$. By the edge-maximality of $G'$, every other face of $G_0$ (that is, not arising from a pair of deleted crossing edges) is a triangular face of $G'$.  Thus, $G_0$ is a multigraph embedded in $\Sigma$ with no crossings, such that each face of $G_0$ is bounded by a 3-cycle or a 4-cycle, and $G$ is contained in $G_0^{(4)}$.
\end{proof}

	\acknowledgements
	\label{sec:ack}
	This research was initiated at the workshop on Graph Product Structure Theory held at the Banff International Research Station in November 2021. Thanks to the referee for several helpful suggestions.
	
	
	\label{sec:biblio}
	
	\def\soft#1{\leavevmode\setbox0=\hbox{h}\dimen7=\ht0\advance \dimen7
		by-1ex\relax\if t#1\relax\rlap{\raise.6\dimen7
			\hbox{\kern.3ex\char'47}}#1\relax\else\if T#1\relax
		\rlap{\raise.5\dimen7\hbox{\kern1.3ex\char'47}}#1\relax \else\if
		d#1\relax\rlap{\raise.5\dimen7\hbox{\kern.9ex \char'47}}#1\relax\else\if
		D#1\relax\rlap{\raise.5\dimen7 \hbox{\kern1.4ex\char'47}}#1\relax\else\if
		l#1\relax \rlap{\raise.5\dimen7\hbox{\kern.4ex\char'47}}#1\relax \else\if
		L#1\relax\rlap{\raise.5\dimen7\hbox{\kern.7ex
				\char'47}}#1\relax\else\message{accent \string\soft \space #1 not
			defined!}#1\relax\fi\fi\fi\fi\fi\fi}
	
\end{document}